\newcommand{\C}{\mathbb{C}}
\newcommand{\DD}{\mathbb{D}}
\newcommand{\Q}{\mathbb{Q}}
\newcommand{\R}{\mathbb{R}}
\newcommand{\Z}{\mathbb{Z}}
\newcommand{\N}{\mathbb{N}}
\renewcommand{\P}{\mathbb{P}}
\newcommand{\fa}{\mathfrak{a}}
\newcommand{\fS}{\mathfrak{S}}
\newcommand{\ddc}{dd^c}
\newcommand{\cC}{\mathcal{C}}
\newcommand{\cE}{\mathcal{E}}
\newcommand{\cH}{\mathcal{H}}
\newcommand{\cJ}{\mathcal{J}}
\newcommand{\cL}{\mathcal{L}}
\newcommand{\cN}{\mathcal{N}}
\newcommand{\cO}{\mathcal{O}}
\newcommand{\cX}{\mathcal{X}}
\newcommand{\lau}[1]{(\!(#1)\!)}
\renewcommand{\a}{\alpha}
\renewcommand{\d}{\delta}
\newcommand{\f}{\varphi}
\newcommand{\g}{\gamma}
\newcommand{\la}{\lambda}
\newcommand{\om}{\omega}
\newcommand{\p}{\psi}
\newcommand{\D}{\Delta}
\newcommand{\ie}{{\rm i.e.\ }}
\newcommand{\ddbar}{\partial\overline{\partial}}
\DeclareMathOperator{\MA}{MA}
\DeclareMathOperator{\id}{id}
\DeclareMathOperator{\ord}{ord}
\DeclareMathOperator{\Aut}{Aut}
\DeclareMathOperator{\PSH}{PSH}
\DeclareMathOperator{\Ric}{Ric}
\DeclareMathOperator{\spec}{Spec}
\DeclareMathOperator{\FS}{FS}
\DeclareMathOperator{\GL}{GL}
\DeclareMathOperator{\Herm}{Herm}
\newcommand{\CAT}{\mathrm{CAT}}
\newcommand{\NA}{\mathrm{NA}}
\newcommand{\ent}{\mathrm{ent}}
\newcommand{\pp}{\mathrm{pp}}
\numberwithin{equation}{section}       
\newtheorem{prop} {Proposition} [section]
\newtheorem{thm}[prop] {Theorem}
\newtheorem{lem}[prop] {Lemma}
\newtheorem{cor}[prop]{Corollary}
\newtheorem{conj}[prop]{Conjecture}
\newtheorem{prop-def}[prop]{Proposition-Definition}
\newtheorem{exam}[prop]{Example}
\theoremstyle{remark}
\title[Variational and non-Archimedean]{Variational and non-Archimedean aspects of the Yau--Tian--Donaldson conjecture}
\date{\today}
\author{S{\'e}bastien Boucksom}
\thanks{The author was partially supported by the ANR project GRACK}
\address{CNRS-CMLS\\
  \'Ecole Polytechnique\\
  F-91128 Palaiseau Cedex\\
  France}
\email{sebastien.boucksom@polytechnique.edu}
\begin{document}

\begin{abstract} We survey some recent developments in the direction of the Yau-Tian-Donaldson conjecture, which relates the existence of constant scalar curvature K\"ahler metrics to the algebro-geometric notion of K-stability. The emphasis is put on the use of pluripotential theory and the interpretation of K-stability in terms of non-Archimedean geometry. 
\end{abstract}

\maketitle

\setcounter{tocdepth}{1}
\tableofcontents

%
%
\section*{Introduction} 
%
%
The search for constant curvature metrics is a recurring theme in geometry, the fundamental uniformization theorem for Riemann surfaces being for instance equivalent to the existence of a (complete) Hermitian metric with constant curvature on any one-dimensional complex manifold. On a higher dimensional complex manifold, \emph{K\"ahler metrics} are defined as Hermitian metrics locally expressed as the complex Hessian of some (plurisubharmonic) function, known as a \emph{local potential} for the metric. As a result, constant curvature problems for K\"ahler metrics boil down to scalar PDEs for their potentials, a famous instance being K\"ahler metrics with constant Ricci curvature, known as \emph{K\"ahler-Einstein} metrics, whose local potentials satisfy a complex Monge-Amp\`ere equation. This was in fact a main motivation for the introduction of K\"ahler metrics in \cite{Kah}, where it was also noted that the complex Monge-Amp\`ere equation in question can be written as the Euler-Lagrange equation of a certain functional. 

In the present paper, we will more generally consider \emph{constant scalar curvature} K\"ahler metrics (cscK metrics for short) on a compact complex manifold $X$. K\"ahler metrics in a fixed cohomology class of $X$ are parametrized by a space $\cH$ of (global) K\"ahler potentials $u\in C^\infty(X)$, cscK metrics corresponding to solutions in $\cH$ of a certain fourth-order nonlinear elliptic PDE. Remarkably, the latter is again the Euler-Lagrange equation of a functional $M$ on $\cH$, discovered by T.~Mabuchi. While $M$ is generally not convex on $\cH$ as an open convex subset of $C^\infty(X)$, Mabuchi defined a natural Riemannian $L^2$-metric on $\cH$ with respect to which $M$ does become convex, opening the way to a variational approach to the cscK problem. The picture was further clarified by S.K.~Donaldson, who noted that $\cH$ behaves like an infinite dimensional symmetric space and emphasized the analogy with the log norm function in Geometric Invariant Theory. 

Using this as a guide, one would like to detect the growth properties of $M$ by looking at its slope at infinity along certain geodesic rays in $\cH$ arising from algebro-geometric one-parameter subgroups, and prove that positivity of these slopes ensures the existence of a minimizer, which would then be a cscK metric. This is basically the prediction of the \emph{Yau-Tian-Donaldson conjecture}, positivity of the algebro-geometric slopes at infinity being equivalent to \emph{K-stability}. In the K\"ahler-Einstein case, this conjecture was famously solved a few years ago by \cite{CDS}, thereby completing intensive research on positively curved K\"ahler-Einstein metrics with many key contributions by G.Tian. 

The more elementary case of convex functions on (finite dimensional) Riemannian symmetric spaces (see \ref{sec:convslope}) and experience from the direct method of the calculus of variations suggest to try to attack the general case of the conjecture along the following steps: 
\begin{enumerate}

\item extend $M$ to a convex functional on a certain metric completion $\bar\cH$, in which coercivity (\ie linear growth) implies the existence of a minimizer; 

\item prove that a minimizer $u$ of $M$ in $\bar\cH$ is a weak solution to the cscK PDE in some appropriate sense, and show that ellipticity of this equation implies that $u$ is smooth, hence a cscK potential; 

\item show that $M$ is either coercive, or bounded above on some geodesic ray in $\bar\cH$;   

\item approximate any geodesic ray $(u_t)$ in $\bar\cH$ by algebro-geometric rays $(u_{j,t})$ in $\cH$, in such a way that (uniform) positivity of the slopes of $M$ along $(u_{j,t})$ forces $M(u_t)\to+\infty$ at infinity. 

\end{enumerate}
As of this writing, (1) and (3) are fully understood, as a combination of \cite{Che00a,Dar15,DR,BerBer,BBEGZ,BBJ}. On the other hand, while (2) and (4) are known in the K\"ahler-Einstein case \cite{BBEGZ,BBJ}, they remain wide open in general\footnote{A proof of (2) has recently been announced by X.X.Chen and J.Cheng \cite{CC1,CC2,CC3}}. The goal of this text is to survey these developments, as well as the analysis of the algebro-geometric slopes at infinity in terms of non-Archimedean geometry, building on \cite{KT,siminag,nama}. It is organized as follows:

\begin{itemize}

\item \S\ref{sec:convnorms} describes the 'baby case' of convex functions on the space of Hermitian norms of a fixed vector space, introducing alternative Finsler metrics and the space of non-Archimedean norms as the cone at infinity; 

\item Section~\ref{sec:Kahler} recalls the basic formalism of K\"ahler potentials and energy functionals; 

\item Section~\ref{sec:var} reviews the link between the metric geometry of $\cH$ and pluripotential theory, and discusses (1), (2) and (3) above; 

\item Section~\ref{sec:NA} introduces the non-Archimedean counterparts to K\"ahler potentials and the energy functionals, and presents a proof of (4) in the K\"ahler-Einstein case. 

 \end{itemize}

\subsection*{Acknowledgments}
My current view of the subject has been framed by collaborations and countless discussions with many mathematicians. Without attempting to be exhaustive, I would like to thank in particular Bo Berndtsson, Jean-Pierre Demailly, Ruadha\'i Dervan, Dennis Eriksson, Philippe Eyssidieux, Paul Gauduchon, Henri Guenancia, Tomoyuki Hisamoto, Mihai P\u{a}un, Valentino Tosatti and David Witt Nystr\"om, and to express my profound gratitude to Robert Berman, Charles Favre, Vincent Guedj, Mattias Jonsson and Ahmed Zeriahi for the key role their ideas have been playing in our joint works over the years. 
%
%
\section{Convex functions on spaces of norms}\label{sec:convnorms}
The complexification $G$ of any compact Lie group $K$ is a reductive complex algebraic group, giving rise to a Riemannian symmetric space $G/K$ and a conical Tits building. The latter can be viewed as the asymptotic cone of $G/K$, and the growth properties of any convex, Lipschitz continuous function on $G/K$ are encoded in an induced function on the building. While this picture is well-known (see for instance \cite{KLM}), it becomes very explicit for the unitary group $U(N)$, for which $G/K\simeq\cN$ is the space of Hermitian norms on $\C^N$. The goal of this section is to discuss this case in elementary terms, along with alternative Finsler metrics on $\cN$, providing a finite dimensional version of the more sophisticated K\"ahler geometric setting considered afterwards. 

%
%
\subsection{Finsler geometry on the space of norms}\label{sec:Finslernorms}
Let $V$ be a complex vector space of finite dimension $N$, and denote by $\cN$ the space of Hermitian norms $\g$ on $V$, viewed as an open subset of the ($N^2$-dimensional) real vector space  $\Herm(V)$ of Hermitian forms $h$. The ordered spectrum of $h\in\Herm(V)$ with respect to $\g\in\cN$ defines a point $\la_\g(h)$ in the \emph{Weyl chamber} 
$$
\cC=\{\la\in\R^N\mid\la_1\ge\dots\ge\la_N\}\simeq\R^N/\fS_N, 
$$
where the symmetric group $\fS_N$ acts on $\R^N$ by permuting coordinates.
\begin{lem}\label{lem:subadd} For each symmetric $\mathrm{(}$\ie $\fS_N$-invariant$\mathrm{)}$ norm $\chi$ on $\R^N$, we have 
$$
\chi\left(\la_\g(h+h')\right)\le\chi\left(\la_\g(h)\right)+\chi\left(\la_\g(h')\right)
$$
for all $\g\in\cN$ and $h,h'\in\Herm(V)$.
\end{lem}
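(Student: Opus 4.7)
The strategy is the classical two-step argument: first establish a majorization between the eigenvalue vectors, then invoke Schur convexity of symmetric norms to upgrade this to an inequality for $\chi$.

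After diagonalizing $\g$ we may assume $\g$ is the standard Hermitian norm on $V=\C^N$, so that $\la_\g(h)$ is just the decreasing vector of eigenvalues of the Hermitian matrix $h$. The target inequality is then
\[
\chi(\la(h+h'))\le\chi(\la(h))+\chi(\la(h'))
\]
for all $h,h'\in\Herm(V)$.

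The key intermediate claim is the majorization
\[
\la(h+h')\prec\la(h)+\la(h'),
\]
meaning $\sum_{i=1}^k\la_i(h+h')\le\sum_{i=1}^k\la_i(h)+\sum_{i=1}^k\la_i(h')$ for $k=1,\dots,N$, with equality at $k=N$. This is the Ky Fan inequality, and I would prove it via the Courant--Fischer variational formula
\[
\sum_{i=1}^k\la_i(A)=\max_{\dim W=k}\tr(A|_W),
\]
the maximum being taken over $k$-dimensional subspaces $W\subset V$. Taking a subspace $W$ realizing the maximum for $A=h+h'$ gives $\sum_{i=1}^k\la_i(h+h')=\tr(h|_W)+\tr(h'|_W)$, each summand being bounded by the corresponding partial sum for $h$ and $h'$. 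Equality at $k=N$ is the trace identity $\tr(h+h')=\tr(h)+\tr(h')$.

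Finally I would invoke Schur convexity of $\chi$: any $\fS_N$-invariant convex function on $\R^N$ is monotone under majorization, because $x\prec y$ implies, by the Hardy--Littlewood--P\'olya / Schur--Horn theorem, that $x$ lies in the convex hull of the $\fS_N$-orbit of $y$, so $\chi(x)\le\max_\sigma\chi(\sigma\cdot y)=\chi(y)$. Applied to the vectors $\la(h+h')$ and $\la(h)+\la(h')$, this yields $\chi(\la(h+h'))\le\chi(\la(h)+\la(h'))$, and the triangle inequality for $\chi$ closes the argument: $\chi(\la(h)+\la(h'))\le\chi(\la(h))+\chi(\la(h'))$.

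Neither step is truly hard; the only point that requires a moment of care is the majorization inequality, which depends on the simultaneous upper bound on partial sums together with the equality at $k=N$ coming from the trace. Everything else is a formal consequence of convexity and permutation invariance of $\chi$.
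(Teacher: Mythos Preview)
Your argument is correct and follows essentially the same route as the paper: both establish the Ky Fan majorization $\la_\g(h+h')\preceq\la_\g(h)+\la_\g(h')$ and then use that majorization means membership in the convex hull of the $\fS_N$-orbit to deduce monotonicity of $\chi$, finishing with the triangle inequality. The only cosmetic differences are that you supply a proof of Ky Fan via the Courant--Fischer variational formula (the paper simply cites it as classical) and you invoke Hardy--Littlewood--P\'olya/Schur--Horn for the convex hull characterization where the paper appeals to Hahn--Banach.
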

\begin{proof} Given $\la,\la'\in\cC$, one says that $\la$ is \emph{majorized by $\la'$}, written $\la\preceq\la'$, if 
$$
\la_1+\dots+\la_i\le\la'_1+\dots+\la'_i
$$ 
for all $i$, with equality for $i=N$. It is a well-known and simple consequence of the Hahn--Banach theorem that $\la\preceq\la'$ iff $\la$ is in the convex envelope of the $\fS_N$-orbit of $\la'$, which implies $\chi(\la)\le\chi(\la')$ by convexity, homogeneity and $\fS_N$-invariance of $\chi$. The Lemma now follows from the classical Ky Fan inequality $\la_\g(h+h')\preceq\la_\g(h)+\la_\g(h')$. 
\end{proof}
Thanks to \ref{lem:subadd}, setting $|h|_{\chi,\g}:=\chi(\la_\g(h))$ defines a continuous Finsler norm $|\cdot|_\chi$ on $\cN$, and hence a length metric $d_\chi$ on $\cN$, with $d_\chi(\g,\g')$ defined as usual as the infimum of the lengths $\int_0^1|\dot\g_t|_{\chi,\g_t}dt$ of all smooth paths $(\g_t)_{t\in[0,1]}$ in $\cN$ joining $\g$ to $\g'$. By equivalence of norms in $\R^N$, all metrics $d_\chi$ on $\cN$ are Lipschitz equivalent. 

\begin{exam} The metric $d_2$ induced by the $\ell^2$-norm on $\R^N$ is the usual Riemannian metric of $\cN$ identified with the Riemannian symmetric space $\GL(N,\C)/U(N)$. In particular, $(\cN,d_2)$ is a complete $\CAT(0)$-space, a nonpositive curvature condition implying that any two points of $\cN$ are joined by a unique (length minimizing) geodesic. 
\end{exam}

\begin{exam} The metric $d_\infty$ induced by the $\ell^\infty$-norm on $\R^N$ admits a direct description as a sup-norm
$$
d_\infty(\g,\g')=\sup_{v\in V\setminus\{0\}}\left|\log\g(v)-\log\g'(v)\right|,
$$
whose exponential is the best constant $C>0$ such that $C^{-1}\g\le\g'\le C\g$ on $V$. 
\end{exam}

In order to describe the geometry of $(\cN,d_\chi)$, introduce for each basis $e=(e_1,\dots,e_N)$ of $V$ the embedding 
$$
\iota_e:\R^N\hookrightarrow\cN
$$ 
that sends $\la\in\R^N$ to the Hermitian norm for which $e$ is orthogonal and $e_i$ has norm $e^{-\la_i}$. The image $\iota_e(\R^N)$ is thus the set of norms in $\cN$ that are diagonalized in the given basis $e$. Any two $\g,\g'\in\cN$ can be jointly diagonalized in some basis $e$, \ie $\g=\iota_e(\la),\g'=\iota_e(\la')$ with $\la,\la'\in\R^N$. After permutation, the vector $\la'-\la$ determines an element $\la(\g,\g')\in\cC$ which only depends on $\g,\g'$, and is obtained by applying $-\log$ to the spectrum of $\g'$ with respect to $\g$. The following result, proved in \cite{BE}, generalizes the well-known Riemannian picture for $d_2$. 

\begin{thm}\label{thm:dchi} For each symmetric norm $\chi$ on $\R^N$, the induced Finsler metric $d_\chi$ on $\cN$ is given by $d_\chi(\g,\g')=\chi\left(\la(\g,\g')\right)$ for all $\g,\g'\in\cN$. It is further characterized as the unique metric on $\cN$ such that $\iota_e:(\R^N,\chi)\hookrightarrow(\cN,d_\chi)$ is an isometric embedding for all bases $e$. 
\end{thm}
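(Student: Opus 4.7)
\medskip
The argument splits into three parts: (i) the upper bound $d_\chi(\g,\g')\le\chi(\la(\g,\g'))$, obtained by exhibiting an explicit path; (ii) the matching lower bound, obtained by constructing enough $1$-Lipschitz functions via duality; and (iii) the uniqueness, which follows at once from the formula.

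\emph{Upper bound.} Given $\g,\g'\in\cN$, choose a basis $e$ of $V$ diagonalizing both, so that $\g=\iota_e(\la)$ and $\g'=\iota_e(\la')$ for some $\la,\la'\in\R^N$. Consider the straight segment $\g_t=\iota_e((1-t)\la+t\la')$ in the flat $\iota_e(\R^N)$. Each $\g_t$ is diagonal in $e$, and so is the derivative $\dot\g_t$; a direct computation gives that the spectrum $\la_{\g_t}(\dot\g_t)$ is, up to a fixed permutation, equal to $\la'-\la$ for every $t$. By $\fS_N$-invariance of $\chi$, the Finsler norm $|\dot\g_t|_{\chi,\g_t}=\chi(\la'-\la)=\chi(\la(\g,\g'))$ is constant along the path, whose length therefore equals $\chi(\la(\g,\g'))$; this bounds $d_\chi(\g,\g')$ from above.

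\emph{Lower bound.} The plan is to build, for each $\mu$ in the closed unit ball of the dual norm $\chi^*$, a $1$-Lipschitz function $\phi_\mu\colon(\cN,d_\chi)\to\R$ such that $\phi_\mu(\g')-\phi_\mu(\g)=\langle\mu,\la(\g,\g')\rangle$. Since $\chi^*$ is itself symmetric, the rearrangement inequality lets us restrict to $\mu$ with decreasing entries. An Abel summation then writes $\langle\mu,\la(\g,\g')\rangle=\sum_{k=1}^N\nu_k\,S_k(\la(\g,\g'))$ with $\nu_k\ge 0$, where $S_k$ denotes the sum of the top $k$ entries, reducing the task to producing a $1$-Lipschitz function for each partial-sum functional $S_k\circ\la(\g,\cdot)$. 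These are furnished by the induced Hermitian norms on the exterior powers $\Lambda^k V$: choosing a top $\g$-unit eigenvector $\omega\in\Lambda^k V$ and setting
\[
\phi_k(\g''):=\log\frac{\Lambda^k\g''(\omega)}{\Lambda^k\g(\omega)},
\]
one recognizes $\phi_k$ as an instance of the supremum formula of the second example (applied to $\Lambda^k V$), hence $1$-Lipschitz for $d_\infty$, and in fact for $d_\chi$ after rescaling by the norm of the linear form $S_k$ on $(\R^N,\chi)$, by the Ky Fan inequality used in Lemma \ref{lem:subadd}. Taking the supremum over admissible $\mu$ yields $d_\chi(\g,\g')\ge\chi(\la(\g,\g'))$.

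\emph{Uniqueness.} Suppose $d$ is any metric on $\cN$ for which every $\iota_e\colon(\R^N,\chi)\hookrightarrow(\cN,d)$ is an isometric embedding. Given $\g,\g'\in\cN$, jointly diagonalize them in some basis $e$ to write $\g=\iota_e(\la)$ and $\g'=\iota_e(\la')$. Then
\[
d(\g,\g')=\chi(\la-\la')=\chi(\la(\g,\g'))=d_\chi(\g,\g'),
\]
so $d=d_\chi$. Conversely the formula proved above shows that $d_\chi$ has this property, which settles the characterization.

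The main obstacle is the second part: one has to control the variation of the ordered spectrum $\la(\g,\cdot)$ along an arbitrary smooth path by its Finsler length, uniformly in the symmetric norm $\chi$. The key simplification is the Abel-summation reduction to the partial sums $S_k$, which can be treated via the elementary $d_\infty$-Lipschitz estimate on exterior powers together with the majorization inequality already exploited in Lemma \ref{lem:subadd}.
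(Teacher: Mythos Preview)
The paper does not actually prove this theorem: immediately before the statement it says ``The following result, proved in \cite{BE}, generalizes the well-known Riemannian picture for $d_2$'', and no argument is supplied afterwards. So there is no proof in the present text to compare your attempt to; the authors defer entirely to \cite{BE}.

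On the merits of your sketch: the upper bound via the straight segment in a flat, and the uniqueness paragraph, are correct (you should, however, make the identification of $\dot\g_t$ with the logarithmic derivative explicit, so that the eigenvalues come out as $\la'-\la$ without a spurious factor). The lower bound strategy---duality, Abel summation, exterior powers---is a legitimate route, but the write-up contains a genuine gap. You claim each $\phi_k$ is $1$-Lipschitz for $d_\chi$ ``after rescaling by the norm of the linear form $S_k$''; call that constant $C_k=\chi^*(1^k,0^{N-k})$. Summing, $\sum_k\nu_k\phi_k$ would then only be $\sum_k|\nu_k|C_k$-Lipschitz, and this is \emph{not} bounded by $1$ in general (take $\chi=\ell^1$, so $C_k\equiv 1$, and $\mu=(1,0,\dots,0,-1)$: one finds $\sum|\nu_k|=3$). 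What is actually needed is the one-sided infinitesimal bound
\[
\tfrac{d}{dt}\phi_k(\g_t)\ \le\ S_k\!\bigl(\la_{\g_t}(\dot\g_t)\bigr),
\]
a Rayleigh-quotient estimate on $\Lambda^k V$, so that with $\nu_k\ge 0$ the combination satisfies $\tfrac{d}{dt}\sum_k\nu_k\phi_k\le\sum_k\nu_k S_k(\la_t)=\langle\mu,\la_t\rangle\le\chi(\la_t)$ directly---no separate rescaling. Two smaller points: the Abel summation gives $\nu_N=\mu_N$, which can be negative, so you must treat the trace term $S_N$ with either sign (via $\pm\log\det$); and with your sign convention $\phi_k(\g')-\phi_k(\g)=-S_k(\la(\g,\g'))$ when $\omega=e_1\wedge\dots\wedge e_k$ in a joint diagonalizing basis, so the correct test function is $-\phi_k$. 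Once these are fixed, your argument goes through.
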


%
\subsection{Convergence to non-Archimedean norms}\label{sec:NAnorm}
By a \emph{geodesic ray} $(\g_t)_{t\in\R_+}$ in $\cN$, we mean a constant speed Riemannian geodesic ray, \ie $d_2(\g_t,\g_s)$ is a constant multiple of $|t-s|$. Every geodesic ray is of the form $\g_t=\iota_e(t\la)$ for some basis $e$ and $\la\in\R^N$, the latter being uniquely determined up to permutation as the spectrum of the Hermitian form $\dot\g_t$ with respect to $\g_t$ for any value of $t$. As a result, $(\g_t)$ is also a (constant speed) geodesic ray for all Finsler metrics $d_\chi$, and indeed satisfies $d_\chi(\g_t,\g_s)=\chi(\la)|t-s|$. The metric $d_\chi$ might admit other geodesic rays in general, but we will not consider these in what follows. 

Two geodesic rays $(\g_t),(\g'_t)$ are called \emph{asymptotic} if $\g_t$ and $\g'_t$ stay at bounded distance with respect to any of the Lipschitz equivalent metrics $d_\chi$, \ie are uniformly equivalent as norms on $V$. This defines an equivalence relation on the set of geodesic rays, whose quotient naturally identifies with a space of \emph{non-Archimedean norms}. 

To see this, pick a geodesic ray $\g_t=\iota_e(t\la)$. Then $\g_t(v)^2=\sum_i|v_i|^2 e^{-2\la_i t}$ for each vector $v=\sum_i v_i e_i$ in $V$, from which one easily gets that $\g_t(v)^{1/t}$ converges to 
\begin{equation}\label{equ:NAdiag}
\a\left(\sum_i v_i e_i\right):=\max_{v_i\ne 0}e^{-\la_i}.
\end{equation}
as $t\to\infty$. The function $\a:V\to\R_+$ so defined satisfies 
\begin{itemize}
\item[(i)] $\a(v+v')\le\max\{\a(v),\a(v')\}$; 
\item[(ii)] $\a(\tau v)=\a(v)$ for all $\tau\in\C^*$; 
\item[(iii)] $\a(v)=0\Longleftrightarrow v=0$, 
\end{itemize}
which means that $\a$ is an element of the space $\cN^\NA$ of non-Archimedean norms on $V$ with respect to the \emph{trivial absolute value} $|\cdot|_0$ on the ground field $\C$, \ie $|0|_0=0$ and $|\tau|_0=1$ for $\tau\in\C^*$. The closed balls of such a norm are linear subspaces of $V$, and the data of $\a$ thus amounts to that of an $\R$-filtration of $V$, or equivalently a flag of linear subspaces together with a tuple of real numbers; for this reason, $\cN^\NA$ is also known in the literature as the \emph{(conical) flag complex}. The space $\cN^\NA$ has a natural $\R_+^*$-action $(t,\a)\mapsto\a^t$, whose only fixed point is the \emph{trivial norm} $\a_0$ on $V$.

The existence of a basis of $V$ compatible with a given flag implies that any non-Archimedean norm $\a\in\cN^\NA$ can be diagonalized in some basis $e=(e_i)$, in the sense that it satisfies \ref{equ:NAdiag} for some $\la\in\R^N$. The image of $\la$ in $\R^N/\fS_N$ is uniquely determined by $\a$, and a complete invariant for the (non-transitive) action of $G=\GL(V)$ on $\cN^\NA$, inducing an identification
$$
\cN^\NA/G\simeq\R^N/\fS_N.
$$
The structure of $\cN^\NA$ can be analyzed just as that of $\cN$ by introducing for each basis $e$ the embedding 
$$
\iota^\NA_e:\R^N\hookrightarrow\cN^\NA
$$ 
sending $\la\in\R^N$ to the non-Archimedean norm (\ref{equ:NAdiag}). Any two norms can be jointly diagonalized, \ie belong to the image of $\iota_e$ for some $e$, and it is proved in \cite{BE} that there exists a unique metric $d^\NA_\chi$ on $\cN^\NA$ for which each $\iota^\NA_e:(\R^N,\chi)\to(\cN^\NA,d^\NA_\chi)$ is an isometric embedding. It is worth mentioning that the Lipschitz equivalent metric spaces $(\cN^\NA,d^\NA_\chi)$, while complete, are \emph{not} locally compact as soon as $N>1$. 

\begin{exam}\label{exam:1PS} Every (algebraic) $1$-parameter subgroup $\rho:\C^*\to\GL(V)$ defines a non-Archimedean norm $\a_\rho\in\cN^\NA$, characterized by 
$$
\a_\rho(v)\le r\Longleftrightarrow\lim_{\tau\to 0}\tau^{\lceil\log r\rceil}\rho(\tau)\cdot v\text{ exists in }V.
$$ 
If $e=(e_i)$ is a basis of eigenvectors for $\rho$ with $\rho(\tau)\cdot e_i=\tau^{\la_i}e_i$, $\la_i\in\Z$, then $\a_\rho=\iota_e(\la)$. This shows that the lattice points $\cN^\NA_\Z$, \ie the images of $\Z^N$ by the embeddings $\iota_e$,  are exactly the norms attached to $1$-parameter subgroups, and ultimately leads to an identification of $(\cN^\NA,d_2)$ with the (conical) Tits building of the reductive algebraic group $\GL(V)$. 
\end{exam}

Coming back to geodesic rays, one proves that the non-Archimedean norms $\a=\lim\g_t^{1/t}$, $\a'=\lim\g_t'^{1/t}$ defined by two rays $(\g_t)$, $(\g'_t)$ are equal iff the rays are asymptotic, and that $d_\chi^\NA$ computes the slope at infinity of $d_\chi$, \ie
\begin{equation}\label{equ:slopedist}
d_\chi^\NA(\a,\a')=\lim_{t\to\infty}\frac{d_\chi\left(\g_t,\g'_t\right)}{t}.
\end{equation}
%
%
%
%
\subsection{Slopes at infinity of a convex function}\label{sec:convslope}
If $f:\R_+\to\R$ is convex,\linebreak $(f(t)-f(0))/t$ is a nondecreasing function of $t$. The \emph{slope at infinity}
$$
f'(\infty):=\lim_{t\to+\infty}\frac{f(t)}{t}\in(-\infty,+\infty]
$$ 
is thus well-defined, and finite if $f$ is Lipschitz continuous. It is characterized as the supremum of all $s\in\R$ such that $f(t)\ge s t+O(1)$ on $\R_+$, and $f$ is bounded above iff $f'(\infty)\le 0$. 

A function $F:\cN\to\R$ on the space of Hermitian norms is (geodesically) convex iff $F\circ\iota_e:\R^N\to\R$ is convex for each basis $e$, and similarly for a function on $\cN^\NA$. Assume further that $F$ is Lipschitz. Then $F(\g_t)$ is convex and Lipschitz continuous on $\R_+$ for each geodesic ray $\g$, and the slope at infinity $\lim_{t\to+\infty} F(\g_t)/t$ only depends on the equivalence class $\a\in\cN^\NA$ defined by $\g$. As a result, $F$ determines a function 
$$
F^\NA:\cN^\NA\to\R,
$$ 
characterized by $F(\g_t)/t\to F^\NA(\a)$ for each ray $(\g_t)$ asymptotic to $\a\in\cN^\NA$, and this function is further convex and Lipschitz continuous by \ref{equ:slopedist}. 

\begin{thm}\label{thm:coerc} Let $F:\cN\to\R$ be a convex, Lipschitz continuous function, and fix a base point $\g_0\in\cN$ and a symmetric norm $\chi$ on $\R^N$. The following are equivalent: 
\begin{itemize}
\item[(i)] $F:\cN\to\R$ is an \emph{exhaustion function}, \ie proper and bounded below; 
\item[(ii)] $F$ is \emph{coercive}, \ie $F(\g)\ge\d\,d_\chi(\g,\g_0)-C$ for some constants $\d,C>0$; 
\item[(iii)] $F^\NA(\a)>0$ for all nontrivial $\a\in\cN^\NA$; 
\item[(iv)] there exists $\d>0$ such that $F^\NA\ge\d\,d_\chi^\NA$.  
\end{itemize}
These conditions are further satisfied as soon as $F$ admits a unique minimizer. 
\end{thm}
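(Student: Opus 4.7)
My plan is to run the cycle (ii) $\Rightarrow$ (i), (i) $\Rightarrow$ (iii), (iii) $\Rightarrow$ (ii), grafting (iv) onto it. The implication (ii) $\Rightarrow$ (i) is immediate: linear coercivity forces $F$ to be proper and bounded below. For (ii) $\Rightarrow$ (iv), I restrict $F(\g)\ge\d\,d_\chi(\g,\g_0)-C$ to a geodesic ray $(\g_t)$ asymptotic to $\a$, divide by $t$ and let $t\to\infty$; by \ref{equ:slopedist} this yields $F^\NA(\a)\ge\d\,d_\chi^\NA(\a,\a_0)$. The converse (iv) $\Rightarrow$ (iii) is trivial since $d_\chi^\NA(\a,\a_0)>0$ for every nontrivial $\a$.

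For (i) $\Rightarrow$ (iii), fix a nontrivial $\a\in\cN^\NA$ and a geodesic ray $(\g_t)_{t\ge 0}$ from $\g_0$ asymptotic to $\a$. By \ref{equ:slopedist}, $d_\chi(\g_t,\g_0)/t\to d_\chi^\NA(\a,\a_0)>0$, so $d_\chi(\g_t,\g_0)\to+\infty$ and properness of $F$ forces $F(\g_t)\to+\infty$. On the other hand, convexity of $t\mapsto F(\g_t)$ makes $(F(\g_t)-F(\g_0))/t$ nondecreasing with supremum $F^\NA(\a)$, so $F(\g_t)\le F(\g_0)+t\,F^\NA(\a)$; if $F^\NA(\a)\le 0$ this would stay bounded, a contradiction, hence $F^\NA(\a)>0$. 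The same argument applied at a unique minimizer $\g_*$ in place of $\g_0$ yields (iii) directly from the uniqueness assumption: picking any ray from $\g_*$ asymptotic to a nontrivial $\a$, the point $\g_1$ is distinct from $\g_*$, so uniqueness gives $F(\g_1)>F(\g_*)$, and the nondecreasing ratio $(F(\g_t)-F(\g_*))/t$ then forces $F^\NA(\a)\ge F(\g_1)-F(\g_*)>0$ in the limit.

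The heart of the proof, and the main obstacle, is (iii) $\Rightarrow$ (ii). I would parametrize the $d_2$-unit-speed geodesic rays from $\g_0$ by unit vectors $v$ in the compact sphere $S\subset\Herm(V)$, and write $\g_t^v$, $\a^v$ for the corresponding ray and its asymptotic class. The difference quotients $f_t(v):=(F(\g_t^v)-F(\g_0))/t$ are continuous in $v$ (since $v\mapsto\g_t^v$ is continuous for fixed $t$ and $F$ is Lipschitz) and nondecreasing in $t$ (convexity along geodesics), so the pointwise limit $f(v):=F^\NA(\a^v)=\sup_t f_t(v)$ is lower semi-continuous on $S$. By (iii), $f>0$ on $S$, and since a lower semi-continuous function on a compact set attains its infimum, $\inf_S f=:\d>0$. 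A standard Dini-style finite-cover argument---for each $v\in S$ pick $T_v$ with $f_{T_v}(v)\ge 3\d/4$ and a neighborhood $U_v$ on which $f_{T_v}\ge\d/2$, extract a finite subcover and take $T_0$ to be the maximum of the corresponding $T_{v_i}$---then gives $f_t\ge\d/2$ on $S$ for all $t\ge T_0$ by monotonicity. Thus $F(\g_t^v)\ge F(\g_0)+(\d/2)t$ for $t\ge T_0$, and the Lipschitz bound $F(\g)\ge F(\g_0)-L\,d_2(\g,\g_0)$ handles $t\le T_0$, yielding $F(\g)\ge(\d/2)d_2(\g,\g_0)-C$; Lipschitz equivalence of $d_2$ and $d_\chi$ then completes (ii). The subtlety I navigate is that the unit sphere in $(\cN^\NA,d_\chi^\NA)$ is not locally compact when $N>1$, so one cannot directly promote the pointwise positivity in (iii) to a uniform bound on $\cN^\NA$; retreating to the compact sphere in $T_{\g_0}\cN$ and exploiting the monotonicity of $f_t$ is what makes the compactness argument go through.
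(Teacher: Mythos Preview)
Your proof is correct, and the implications (ii)$\Rightarrow$(i), (i)$\Rightarrow$(iii), (ii)$\Rightarrow$(iv), (iv)$\Rightarrow$(iii) match the paper's treatment essentially verbatim. The substantive difference lies in the key step (iii)$\Rightarrow$(ii). The paper argues by contradiction: assuming coercivity fails, it picks a sequence $\g_j$ with $F(\g_j)\le\d_j\,d_\chi(\g_j,\g_0)-C_j$, $\d_j\to 0$, $C_j\to\infty$, takes the unit-speed geodesic segments from $\g_0$ to $\g_j$, and uses Ascoli to extract a limiting geodesic ray along which $F$ stays bounded above by $F(\g_0)$, contradicting (iii). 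Your route is instead direct and quantitative: you parametrize all $d_2$-unit-speed rays from $\g_0$ by the compact tangent sphere $S\subset\Herm(V)$, observe that the difference quotients $f_t$ increase to the lsc function $f=F^\NA(\a^{\,\cdot\,})>0$, and run a Dini argument to get a uniform $f_t\ge\d/2$ for $t\ge T_0$. Your approach is more constructive and gives an explicit coercivity constant, but leans on the finite-dimensionality of $\cN$ (compactness of $S$) in an essential way; the paper's Ascoli argument, while also using local compactness, is packaged so as to transport more directly to the infinite-dimensional setting of Theorem~\ref{thm:Me1}, where it is reused almost verbatim with the entropy compactness of Theorem~\ref{thm:convcomp} replacing Ascoli. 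For the unique-minimizer clause, you deduce (iii) directly by looking at a single ray from the minimizer, whereas the paper reruns its Ascoli argument to produce a ray of minimizers and reach a contradiction; both are fine, and yours is slightly shorter.
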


\begin{proof} Clearly, (ii) implies (i), and (i) implies that $F(\g_t)$ is unbounded for any geodesic ray, hence has a positive slope at infinity, which yields (iii). Let us now prove (iii)$\Longrightarrow$(ii). Assuming by contradiction that there exists a sequence $\g_j$ in $\cN$ such that
\begin{equation}\label{equ:noncoer}
F(\g_j)\le\d_j d_\chi(\g_j,\g_0)-C_j
\end{equation}
with $\d_j\to 0$ and $C_j\to+\infty$, we are going to construct a non-constant geodesic ray $(\g_t)$ along which $F$ is bounded above, contradicting the positivity of the slope at infinity along this ray. By Lipschitz continuity, \ref{equ:noncoer} implies $T_j:=d_\chi(\g_j,\g_0)\to\infty$. For each $j$, let $(\g_{j,t})_{t\in [0,T_j]}$ be the geodesic segment joining $\g_0$ to $\g_j$, parametrized so that $t=d_\chi(\g_{j,t},\g_0)$. By Ascoli's theorem, $(\g_{j,t})$ converges to a geodesic ray $(\g_t)$, uniformly on compact sets of $\R_+$. By convexity of $F$, we have 
$$
\frac{F(\g_{j,t})-F(\g_0)}{t}\le\frac{F(\g_j)-F(\g_0)}{T_j}, 
$$
hence $F(\g_{j,t})\le\d_jt+F(\g_0)$, which yields in the limit the upper bound $F(\g_t)\le F(\g_0)$. At this point, we have thus shown that (i), (ii) and (iii) are equivalent. That (ii)$\Longrightarrow$(iv) follows from \ref{equ:slopedist}, while (iv) clearly implies (iii). 

Assume finally that $F$ admits a unique minimizer, which we may take as the base point $\g_0$. If $F$ is not coercive, the previous argument yields a nonconstant ray $(\g_t)$ such that $F(\g_t)\le F(\g_0)=\inf F$, which shows that all $\g_t$ are mininizers of $F$, and hence $\g_t=\g_0$ by uniqueness, a contradiction.  \end{proof}

\section{The constant scalar curvature problem for K\"ahler metrics}\label{sec:Kahler}
This section recalls the basic formalism of constant curvature K\"ahler metrics, and introduces the corresponding energy functionals. 
%
%
%
%
\subsection{K\"ahler metrics with constant curvature}\label{sec:curv}
Let $X$ be a compact complex manifold, and denote by $n$ its (complex) dimension. The data of a Hermitian metric on the tangent bundle $T_X$ is equivalent to that of a positive $(1,1)$-form $\om$, locally expressed in holomorphic coordinates $(z_j)$ as 
$\om=\sqrt{-1}\sum_{ij}\om_{ij} dz_i\wedge d\bar z_j$ with $(\om_{ij})$ a smooth family of positive definite Hermitian matrices. One says that $\om$ is \emph{K\"ahler} if it satisfies the following equivalent conditions: 
\begin{itemize}
\item[(i)] $d\om=0$; 
\item[(ii)] $\om$ admits local potentials, \ie smooth real valued valued functions $u$ such that $\om=\sqrt{-1}\ddbar u$, or 
$\om_{ij}=\partial^2 u/\partial z_i\partial\bar z_j$ in local coordinates; 
\item[(iii)] the Levi-Civita connection $\nabla$ of $\om$ on the tangent bundle $T_X$ coincides with the Chern connection, \ie the unique Hermitian connection with $\nabla^{0,1}=\overline{\partial}$. 
\end{itemize}
The K\"ahler condition thus ensures compatibility between Riemannian and complex Hermitian geometry. The (normalized) curvature tensor $\Theta_\om(T_X):=\frac{\sqrt{-1}}{2\pi} \nabla^2$ of a K\"ahler metric is a $(1,1)$-form with values in the Hermitian endomorphisms of $T_X$, whose trace with respect to $T_X$ coincides with the \emph{Ricci curvature} $\Ric(\om)$ in the sense of Riemannian geometry. In other words, the Ricci tensor of a K\"ahler metric can be seen as the curvature of the induced metric on the dual of the \emph{canonical bundle} $K_X:=\det T^\star_X$, the factor $2\pi$ being included in the curvature so that the de Rham cohomology class of the closed $(1,1)$-form $\Ric(\om)$ coincides with the first Chern class 
$$
c_1(X):=c_1(T_X)=-c_1(K_X).
$$
In terms of the normalized operator $\ddc :=\frac{\sqrt{-1}}{2\pi}\ddbar$ and a local function $u$ with $\om=\ddc  u$, we have
$$
\Ric(\om)=-\ddc \log\det\left(\frac{\partial^2 u}{\partial z_j\partial\bar z_k}\right),
$$
which accounts for the ubiquity of the \emph{complex Monge-Amp\`ere operator} $u\mapsto\det\left(\partial^2 u/\partial z_j\partial\bar z_k\right)$ in K\"ahler geometry. Taking the trace of $\Ric(\om)$ with respect to $\om$ yields the \emph{scalar curvature} 
$$
S(\om)=n\frac{\Ric(\om)\wedge\om^{n-1}}{\om^n}=\D\log\det\left(\frac{\partial^2 u}{\partial z_j\partial\bar z_k}\right).
$$
Denote by $V:=\int_X\om^n=[\om]^n$ the volume of $\om$, and observe that the mean value of $S(\om)$ is the cohomological constant 
$$
V^{-1}\int_X S(\om)\om^n=n V^{-1}\int_X\Ric(\om)\wedge\om^{n-1}=-n\la
$$ 
with 
$$
\la:=V^{-1}\left(c_1(K_X)\cdot[\om]^{n-1}\right). 
$$
As a result, there exists a unique function $\rho\in C^\infty(X)$, the \emph{Ricci potential} of $\om$, such that
$$
\left\{
    \begin{array}{ll}
        \D\rho =S(\om)+n\la \\
        \int_X e^\rho\om^n=1.
    \end{array}
\right.
$$
This defines a smooth, positive probability measure $\mu_0:=e^\rho\om^n$ which we call the \emph{Ricci normalized volume form of $\om$}. 

To the above three notions of curvature correspond the following three versions of the constant curvature problem.
\begin{itemize}
\item[(a)] Requiring the full curvature tensor of $\om$ to be constant, \ie 
$$
\Theta_\om(T_X)=-\frac{\la}{n}\om\otimes\id_{T_X},
$$
is a very strong condition which implies uniformization, in the sense that $(X,\om)$ must be isomorphic (after scaling the metric) to the complex projective space ($\la<0$), a finite quotient of a compact complex torus ($\la=0$), or a cocompact quotient of the complex hyperbolic ball ($\la>0$). 

\item[(b)] A \emph{K\"ahler-Einstein metric} (KE for short) is a K\"ahler metric $\om$ of constant Ricci curvature, \ie satisfying $\Ric(\om)=-\la\om$, the K\"ahler analogue of the Einstein equation. Passing to cohomology classes yields the necessary proportionality condition 
\begin{equation}\label{equ:propor}
c_1(K_X)=\la[\om]
\end{equation}
in $H^2(X,\R)$, which implies that the canonical bundle has a sign: $X$ is either \emph{canonically polarized} ($\la>0$), \emph{Calabi-Yau} ($\la=0$) or \emph{Fano} ($\la<0$). 

\item[(c)] Finally, a \emph{constant scalar curvature K\"ahler metric} (cscK for short) is a K\"ahler metric $\om$ with $S(\om)$ constant, \ie $S(\om)=-n\la$. Here the sign of $\la$ only gives very weak information on the positivity properties of $K_X$. Note that $S(\om)$ is constant iff the Ricci potential $\rho$ is harmonic, hence constant by compactness of $X$.
\end{itemize}
While a KE metric $\om$ is trivially cscK, it is remarkable that the converse is also true as soon as the (necessary) cohomological proportionality condition holds, the reason being
\begin{equation}\label{equ:Riccipot}
(\ref{equ:propor})\Longrightarrow\Ric(\om)=-\la\om+\ddc \rho.
\end{equation}
This follows indeed from the $\ddbar$-lemma, which states that an exact $(p,q)$-form on a compact K\"ahler manifold is $\ddbar$-exact, hence \ref{equ:propor} $\Longleftrightarrow\Ric(\om)=-\la\om+\ddc  f$ for some $f\in C^\infty(X)$. Taking the trace with respect to $\om$ shows that $f-\rho$ is harmonic, hence constant, proving \ref{equ:Riccipot}. 

Thanks to the same $\ddbar$-lemma, one can introduce \emph{global} potentials for K\"ahler metrics in a fixed cohomology class. More precisely, given a K\"ahler form $\om$, any other K\"ahler form in the cohomology class of $\om$ is of the form $\om_u:=\om+\ddc  u$ with $u$ a \emph{K\"ahler potential}, \ie an element of the open, convex set of smooth functions
$$
\cH:=\left\{u\in C^\infty(X)\mid\om_u>0\right\}. 
$$
Assuming \ref{equ:propor}, and hence $\Ric(\om)=-\la\om+\ddc \rho$, a simple computation yields 
$$
\Ric(\om_u)+\la\om_u=\ddc \log\left(\frac{e^{\la u}\mu_0}{\om_u^n}\right),
$$
and $\om_u$ is thus K\"ahler-Einstein iff $u$ satisfies the complex Monge-Amp\`ere equation
\begin{equation}\label{equ:MAKE}
\MA(u):=V^{-1}\om_u^n=c\,e^{\la u}\mu_0
\end{equation}
where $c>0$ is a normalizing constant ensuring that the right-hand side is a probability measure. 
%
%
\subsection{Energy functionals}\label{sec:energy}
A fundamental feature of the cscK problem, discovered by \cite{Mab86}, is that the corresponding (fourth order) PDE $S(\om_u)+n\la=0$ for a potential $u$ can be written as the Euler-Lagrange equation of a functional $M:\cH\to\R$, the \emph{Mabuchi K-energy functional}. It is characterized by 
$$
\frac{d}{dt} M(u_t)=-\int_X\dot u_t\left(S(\om_{u_t})+n\la\right)\MA(u_t)
$$
for any smooth path $(u_t)$ in $\cH$, and normalized by $M(0)=0$. Note that $M(u)$ is invariant under translation of a constant, hence only depends on the K\"ahler metric $\om_u$. The Chen--Tian formula for $M$ \cite{Che00b,Tianbook} yields a decomposition
$$
M=M_{\ent}+M_{\pp}, 
$$
where the \emph{entropy part} 
$$
M_{\ent}(u):=\int_X\log\left(\frac{\MA(u)}{\mu_0}\right)\MA(u)\in [0,+\infty)
$$
is the relative entropy of the probability measure $\MA(u)$ with respect to the Ricci normalized volume form $\mu_0$, and the \emph{pluripotential part} $M_{\pp}(u)$ is a linear combination of terms of the form 
$\int_Xu\,\om_u^j\wedge\om^{n-j}$ and  $\int_Xu\,\Ric(\om)\wedge\om_u^j\wedge \om^{n-j-1}$.

\smallskip

Assume now that the cohomological proportionality condition $c_1(K_X)=\la[\om]$ holds, so that $\om_u$ is cscK iff $u$ satisfies the complex Monge-Amp\`ere \ref{equ:MAKE}. Besides the K-energy $M$, another (simpler) functional also has \ref{equ:MAKE} as its Euler--Lagrange equation. Indeed, the complex Monge-Amp\`ere operator $\MA(u)$ is the derivative of a functional $E:\cH\to\R$, \ie
$$
\frac{d}{dt}E(u_t)=\int_X\dot u_t\MA(u_t)
$$
The functional $E$, normalized by $E(0)=0$, is called the \emph{Monge-Amp\`ere energy} (with strong fluctuations in both notation and terminology across the literature), and is explicitly given by
\begin{equation}\label{equ:E}
E(u)=\frac{1}{n+1}\sum_{j=0}^nV^{-1}\int_Xu\,\om_u^j\wedge\om^{n-j}.
\end{equation}
It follows that $\om_u$ is cscK (equivalently, KE) iff $u$ is a critical point of the \emph{Ding functional} $D:\cH\to\R$, defined as $D:=L-E$ with 
$$
L(u):=\left\{
    \begin{array}{ll}
        \la^{-1}\log\left(\int_X e^{\la u}\mu_0\right) & \mbox{ if }\la\ne0\\
        \int_X u\,\mu_0 & \mbox{ if }\la=0.
    \end{array}
\right.
$$
Note that $E(u+c)=E(u)+c$ and $L(u+c)=L(u)+c$ for $c\in\R$, so that $D(u)$, just as $M(u)$, is invariant under translation of $u$ by a constant, and hence only depends on the K\"ahler form $\om_u$. 


%
%
%
\section{The variational approach}\label{sec:var}
This section first describes the $L^p$-geometry of the space of K\"ahler potentials, with respect to which the K-energy becomes convex. This is used to relate the coercivity of $M$, its growth along geodesic rays, and the existence of minimizers. 
%
%
%
%
\subsection{The Mabuchi $L^2$-metric and weak geodesics}
As we saw above, cscK metrics are characterized as critical points of the K-energy $M:\cH\to\R$. In order to set up a variational approach to the cscK problem, an ideal scenario would thus be that $M$ be convex with respect to the linear structure of $\cH$ as an open convex subset of the vector space $C^\infty(X)$, which would in particular imply that cscK metrics correspond to minimizers of $M$. 

While convexity in this sense fails in general, Mabuchi realized in \cite{Mab87} that $M$ does become convex with respect to a more sophisticated notion of geodesics in $\cH$. The infinite dimensional manifold $\cH$ is indeed endowed with a natural Riemannian metric, defined at $u\in\cH$ as the $L^2$-scalar product with respect to the volume form $\MA(u)=V^{-1}\om_u^n$. Mabuchi computed the Levi-Civita connection and curvature of this $L^2$-metric, and proved that the (Riemannian) Hessian of $M$ is everywhere nonnegative, so that $M$ is convex along (smooth) geodesics in $\cH$. 

The existence of a geodesic joining two given points in $\cH$ thus becomes a pressing issue, and new light was shed on this problem in \cite{Sem,Don99}, with the key observation that the equation for geodesics in $\cH$ can be rewritten as a complex Monge-Amp\`ere equation. In terms of the one-to-one correspondence between paths $(u_t)_{t\in I}$ of functions on $X$ parametrized by a open interval $I\subset\R$ and $S^1$-invariant functions $U$ on the product $X\times\DD_I$ of $X$ with the annulus 
$$
\DD_I:=\{\tau\in\C\mid -\log|\tau|\in I\}
$$
given by setting
\begin{equation}\label{equ:Phi}
U(x,\tau)=u_{-\log|\tau|}(x),
\end{equation}
a smooth path $(u_t)_{t\in I}$ in $\cH$ is a geodesic iff $U$ satisfies the complex Monge-Amp\`ere equation
\begin{equation}\label{equ:HCMA}
\left(\om+\ddc  U\right)^{n+1}=0. 
\end{equation}
Finding a geodesic $(u_t)_{t\in[0,1]}$ joining two given points $u_0,u_1\in\cH$ thus amounts to solving \ref{equ:HCMA} with prescribed boundary data. While uniqueness is a simple matter, existence is much more delicate (and turns out to fail in general), as vanishing of the right-hand side makes this nonlinear elliptic equation degenerate. Since the restriction of the $(1,1)$-form $\om+\ddc U$ to each slice $X\times\{\tau\}$ is required to be positive, \ref{equ:HCMA} imposes that $\om+\ddc U\ge 0$, which means by definition that $U$ is $\om$-psh (for plurisubharmonic). Thanks to this observation, geodesics can be approached using pluripotential theory. 

Denote by $\PSH(X,\om)$ the space of $\om$-psh functions on $X$, \ie pointwise limits of decreasing sequences in $\cH$, by \cite{BK}. Following \cite[\S 2.2]{Bern2}, we define a \emph{subgeodesic} in $\PSH(X,\om)$ as a family $(u_t)_{t\in I}$ of $\om$-psh functions whose corresponding function $U$ on $X\times\DD_I$ is $\om$-psh, a condition which implies in particular that $u_t(x)$ is a convex function of $t$. A \emph{weak geodesic} $(u_t)_{t\in I}$ is a subgeodesic which is \emph{maximal}, \ie for any compact interval $[a,b]\subset I$ and any subgeodesic $(v_t)_{t\in(a,b)}$, 
$$
\lim_{t\to a} v_t\le u_a\text{ and }\lim_{t\to b} v_t\le u_b\Longrightarrow v_t\le u_t\text{ for }t\in(a,b).
$$
\begin{lem}\label{lem:supaff}\cite{Dar17b} Let $(u_t)_{t\in I}$ be a weak geodesic in $\PSH(X,\om)$, and pick a compact interval $[a,b]\subset I$. If $u_b-u_a$ is bounded above, then $t\mapsto\sup_X(u_t-u_a)$ is affine on $[a,b]$. 
\end{lem}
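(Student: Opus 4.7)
My plan is to trap $M(t):=\sup_X(u_t-u_a)$ between the same affine function of $t$ from above and below on $[a,b]$, which then forces $M$ to be affine.

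For the upper bound: since $(u_t)$ is a subgeodesic, $t\mapsto u_t(x)$ is convex on $[a,b]$ for each $x\in X$, so $M$ is convex as a supremum of convex functions. With $C:=\sup_X(u_b-u_a)<\infty$ by hypothesis, $M(a)=0$ and $M(b)=C$, and the chord inequality for convex functions yields $M(t)\le\tfrac{t-a}{b-a}\,C$ on $[a,b]$.

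The substantive step is the matching lower bound, which I plan to extract from the maximality property of the weak geodesic by comparing with an explicit affine-in-$t$ subgeodesic. Set
$$
s_t(x):=u_b(x)-C+\tfrac{t-a}{b-a}\,C.
$$
The corresponding $S^1$-invariant function $S$ on $X\times\DD_{(a,b)}$ differs from the pullback of $u_b$ by an affine function of $-\log|\tau|$, which is pluriharmonic on the annulus $\DD_{(a,b)}$; hence $\om+\ddc S=\om+\ddc u_b\ge 0$ and $(s_t)$ is a subgeodesic. Its boundary values are $s_a=u_b-C\le u_a$ (by the very definition of $C$) and $s_b=u_b$, so the hypotheses in the definition of a weak geodesic are satisfied for $(s_t)$. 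Maximality of $(u_t)$ then gives $s_t\le u_t$ on $(a,b)$, hence $u_t-u_a\ge u_b-u_a-C+\tfrac{t-a}{b-a}\,C$, and taking $\sup_X$ yields
$$
M(t)\ge\sup_X(u_b-u_a)-C+\tfrac{t-a}{b-a}\,C=\tfrac{t-a}{b-a}\,C,
$$
as desired.

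The only genuine choice in the argument is the comparison subgeodesic, and I expect that this is where the only conceptual subtlety lies: one must take the spatial profile of $(s_t)$ to be the endpoint $u_b$ itself, shifted down by precisely $C$ so as to fit under $u_a$ at $t=a$ and land exactly at $u_b$ at $t=b$. This is the unique affine-in-$t$ candidate whose endpoint data calibrate the chord. Once this choice is made, verifying that $(s_t)$ is a subgeodesic reduces to the pluriharmonicity of $-\log|\tau|$ on the annulus, and the rest is bookkeeping of constants.
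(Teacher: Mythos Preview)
Your proof is correct and follows exactly the same route as the paper's: the upper bound comes from pointwise convexity of $t\mapsto u_t(x)$, and the lower bound from comparing with the affine-in-$t$ subgeodesic $s_t=u_b+\tfrac{t-b}{b-a}\,C$ (which is the paper's $v_t=u_1+(t-1)m$ after reparametrizing to $[0,1]$). The only difference is cosmetic: you spell out the pluriharmonicity argument for why $(s_t)$ is a subgeodesic, while the paper leaves this implicit.
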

\begin{proof} After reparametrizing, we assume for ease of notation that $a=0$ and $b=1$, and set $m:=\sup_X(u_1-u_0)$. For $t\in[0,1]$, the inequality $\sup_X(u_t-u_0)\le tm$ follows directly from the convexity of $t\mapsto u_t(x)$. Since $v_t(x):=u_1(x)+(t-1)m$ is a subgeodesic with $v_0\le u_0$ and $v_1\le u_1$, maximality of $(u_t)$ implies $v_t\le u_t$ for $t\in[0,1]$, and hence 
$$
tm=\sup_X(u_1-u_0)+(t-1)m\le\sup_X(u_t-u_0).
$$ 
\end{proof}
Given $u_0,u_1\in\PSH(X,\om)$, the weak geodesic $(u_t)_{t\in(0,1)}$ joining them is defined as the usc upper envelope of the family of all subgeodesics $(v_t)_{t\in(0,1)}$ such that $\lim_{t\to 0} v_t\le u_0$, $\lim_{t\to 1} v_t\le u_1$ (or $u_t\equiv-\infty$ if no such subgeodesic exists). When $u_0,u_1$ are bounded, the weak geodesic $(u_t)$ is locally bounded, and a 'balayage' argument shows that the corresponding function $U$ is the unique locally bounded solution to \ref{equ:HCMA} in the sense of \cite{BT76}, with the prescribed boundary data. Even for $u_0,u_1\in\cH$, exemples due to \cite{LV} show that the weak geodesic $(u_t)$ joining them is not $C^2$ in general, but initial work by \cite{Che00a}, succesively refined in \cite{Blo} and \cite{CTW}, eventually established that $U$ is locally $C^{1,1}$. 
%
%
%
%
\subsection{$L^p$-geometry in the space of K\"ahler potentials}
Just as the Riemannian metric on the space of norms $\cN$ can be generalized to a Finsler $\ell^p$-metric for any $p\in[1,\infty]$ (cf.~\ref{sec:Finslernorms}), it was noticed by T.~Darvas that the Mabuchi $L^2$-metric on $\cH$ admits an immediate generalization to an $L^p$-Finsler metric, by replacing the $L^2$-norm with the $L^p$-norm in the above definition. The associated pseudometric $d_p$ on $\cH$ is defined by letting $d_p(u,u')$ be the infimum of the $L^p$-lengths
$$
\int_0^1\|\dot u_t\|_{L^p(\MA(u_t))}dt
$$ 
of all smooth paths $(u_t)_{t\in[0,1]}$ in $\cH$ joining $u$ to $u'$. We trivially have $d_p\le d_{p'}$ for $p\le p'$, but the fact that $d_p$ is actually a metric (\ie separates distinct points) is a nontrivial result in this infinite dimensional setting, proved in \cite{Che00a} for $p=2$ and in \cite{Dar15} for $d_1$, and hence for all $d_p$. 

The space $\cH$ is not complete for any of the metrics $d_p$, and the description of the completion was completely elucidated in \cite{Dar15} in terms of pluripotential theory, following an earlier attempt by V.~Guedj. The class 
$$
\cE\subset\PSH(X,\om)
$$
of $\om$-psh functions $u$ with \emph{full Monge-Amp\`ere mass}, introduced by Guedj-Zeriahi in \cite{GZ} (see also \cite{BEGZ}), may be described as the largest class of $\om$-psh functions on which the Monge-Amp\`ere operator $u\mapsto\MA(u)$ is defined and satisfies: 
\begin{itemize}
\item[(i)] $\MA(u)$ is a probability measure that puts no mass on pluripolar sets, \ie sets of the form $\{\p=-\infty\}$ with $\p$ $\om$-psh; 
\item[(ii)] the operator is continuous along decreasing sequences. 
\end{itemize}

For $p\in[1,\infty]$, the class $\cE^p\subset\cE$ of $\om$-psh functions with \emph{finite $L^p$-energy} is defined as the set of $u\in\cE$ that are $L^p$ with respect to $\MA(u)$. For domains in $\C^n$, the analogue of $\cE^p$ was first introduced by U.~Cegrell in his pioneering work \cite{Ceg}. 

\begin{exam} If $X$ is a Riemann surface, a function $u\in\PSH(X,\om)$ belongs to $\cE$ iff the measure $\om+\ddc  u$ puts no mass on polar sets, and $u$ is in $\cE^1$ iff it satisfies the classical finite energy condition $\int_X du\wedge d^c u<+\infty$, which means that the gradient of $u$ is in $L^2$. 
\end{exam}

The following results are due to T.~Darvas. 

\begin{thm}\cite{Dar15}\label{thm:Dar} The metric $d_p$ admits a unique extension to $\cE^p$ that is continuous along decreasing sequences, and $(\cE^p,d_p)$ is the completion of $(\cH,d_p)$. Further:
\begin{itemize}
\item[(i)] $d_p(u,u')$ is Lipschitz equivalent to $\|u-u'\|_{L^p(\MA(u))}+\|u-u'\|_{L^p(\MA(u'))}$; 
\item[(ii)] the weak geodesic $(u_t)_{t\in[0,1]}$ joining any two $u_0,u_1\in\cE^p$ is contained in $\cE^p$, and is a constant speed geodesic in the metric space $(\cE^p,d_p)$, \ie $d_p(u_t,u_{t'})=c|t-t'|$ for some constant $c$. 
\end{itemize}
\end{thm}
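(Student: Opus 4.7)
The plan is to first establish everything on the open dense subset $\cH\subset\cE^p$, where one has the $C^{1,1}$ weak geodesics from \cite{Che00a,Blo,CTW}, then use the Lipschitz equivalence in (i) as the bridge that lets one pass to the completion. The global strategy mirrors the finite-dimensional one of \S\ref{sec:Finslernorms}: in the smooth setting, compute the length of the canonical $C^{1,1}$ geodesic $(u_t)_{t\in[0,1]}$ joining $u_0,u_1\in\cH$ and show that it already realizes $d_p(u_0,u_1)$. The essential point, which plays the role of the constancy of $\dot\g_t$ for Riemannian geodesics, is that for any $p\in[1,\infty]$ the function $t\mapsto\|\dot u_t\|_{L^p(\MA(u_t))}$ is constant along a weak $C^{1,1}$ geodesic. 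For $p=2$ this is Mabuchi's conservation law; for general $p$ it follows from the degenerate Monge--Amp\`ere equation \ref{equ:HCMA} by the same integration-by-parts argument once one has enough regularity to differentiate under the integral sign. A standard lower-semicontinuity argument then shows that any smooth path has $L^p$-length at least $\|\dot u_0\|_{L^p(\MA(u_0))}=\|\dot u_1\|_{L^p(\MA(u_1))}$, so the $C^{1,1}$ geodesic is length-minimizing and $d_p(u_0,u_1)$ equals either of these two quantities.

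With this identification in hand, I would prove (i) on $\cH$ by a two-sided comparison. Since $t\mapsto u_t(x)$ is convex, $\dot u_0\le u_1-u_0\le\dot u_1$ pointwise; combining this with the Monge--Amp\`ere comparison principle and the monotonicity properties developed in \cite{BEGZ,GZ} yields the upper bound
$$
d_p(u_0,u_1)\le\|u_1-u_0\|_{L^p(\MA(u_0))}+\|u_1-u_0\|_{L^p(\MA(u_1))}.
$$
The reverse inequality requires more care: one extends the geodesic slightly (using monotonicity of $E$ or a shift by a linear function of $t$) and uses the Monge--Amp\`ere estimates, as in \cite{Dar15}, to dominate $\|u_1-u_0\|_{L^p(\MA(u_i))}$ by a constant multiple of $\|\dot u_i\|_{L^p(\MA(u_i))}$, with a constant depending only on $n,p$.

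For the extension to $\cE^p$, any $u\in\cE^p$ can be written as the decreasing limit of a sequence $u_j\in\cH$, by Demailly regularization \cite{BK}. Using (i), the $d_p$-Cauchy property of $(u_j)$ reduces to convergence of the hybrid $L^p$-norms, which is controlled by the finite energy assumption and the continuity of the Monge--Amp\`ere operator along decreasing sequences in $\cE$. This yields a continuous extension of $d_p$ to $\cE^p$, and (i) persists by continuity; density of $\cH$ identifies $(\cE^p,d_p)$ as the abstract completion, while completeness of $\cE^p$ itself comes from standard compactness in $\PSH(X,\om)$ together with the $L^p$-energy bound preserved along Cauchy sequences.

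Finally, for (ii), given $u_0,u_1\in\cE^p$ one defines the weak geodesic $(u_t)$ by the usual envelope of subgeodesics and approximates from above by the $C^{1,1}$ geodesics associated to decreasing smooth sequences $u_0^j,u_1^j\in\cH$ converging to $u_0,u_1$. Lemma~\ref{lem:supaff} controls the supremum along the way, and the affinity of the Monge--Amp\`ere energy $E$ along weak geodesics (a computation using \ref{equ:HCMA}) combined with monotonicity keeps $E(u_t)$ finite, forcing $u_t\in\cE^p$. The constant speed property passes to the limit from the smooth case via (i), once one has verified that the approximating geodesics converge in $d_p$ to the target weak geodesic. The main obstacle I expect is precisely this last stability statement under singular degenerations: ensuring that the $L^p$-length is continuous under the decreasing approximation requires uniform control of the Monge--Amp\`ere measures of the approximating geodesics, and this is where the delicate finite-energy machinery of \cite{GZ,BEGZ} becomes indispensable.
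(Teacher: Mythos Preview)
The paper does not contain a proof of this theorem: it is stated with the attribution \cite{Dar15} and used as a black box, so there is no argument in the paper to compare your proposal against. Your sketch is a reasonable outline of Darvas's original strategy, but a few points deserve sharpening if you intend it as an actual proof. First, the key invariant along a $C^{1,1}$ weak geodesic is not merely the number $\|\dot u_t\|_{L^p(\MA(u_t))}$ but the full pushforward measure $(\dot u_t)_*\MA(u_t)$ on $\R$; this is what replaces the Weyl-chamber invariant $\la_\g(h)$ of \S\ref{sec:Finslernorms} and is needed to run the comparison with arbitrary smooth paths. Second, your ``lower-semicontinuity argument'' for the length-minimizing property is where the real work lies: one must produce, for an arbitrary smooth competitor, a pointwise or measure-theoretic comparison with the endpoint derivative $\dot u_0$, and this requires the Pythagorean-type identity $d_p(u_0,u_1)^p = d_p(u_0,u_0\vee u_1)^p + d_p(u_0\vee u_1,u_1)^p$ together with the explicit formula $d_p(u,v)=\left(E(v)-E(u)\right)^{1/p}$-type estimates in the ordered case, rather than a direct variational argument. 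Finally, your treatment of the reverse inequality in (i) via ``extending the geodesic slightly'' is not how this is done; the actual mechanism is again the decomposition through $u_0\vee u_1$ combined with estimates of the form $\int_X|u-v|^p\MA(w)\le C\int_X|u-v|^p\left(\MA(u)+\MA(v)\right)$ for $u\le w\le v$.
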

%
%
%
%
\subsection{Energy functionals on $\cE^1$}
The weakest metric $d_1$ turns out to be the most relevant one for K\"ahler geometry, due to its close relationship with the Monge-Amp\`ere energy $E$. By \cite{BBGZ,Dar15}, mixed Monge-Amp\`ere integrals of the form 
$$
\int_X u_0\,\om_{u_1}\wedge\dots\wedge\om_{u_n}
$$ 
with $u_i\in\cE^1$ are well-defined, and continuous with respect to the $u_i$ in the $d_1$-topology. In particular, the Monge-Amp\`ere operator is continuous in this topology, and \ref{equ:E} yields a continuous extension of $E$ to $\cE^1$, which is proved to be convex on subgeodesics, and \emph{affine} on weak geodesics. 

\begin{lem}\label{lem:d1E} If $u,u'\in\cE^1$ satisfy $u\le u'$, then $d_1(u,u')=E(u')-E(u)$. 
\end{lem}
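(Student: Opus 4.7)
The plan is to establish the identity first for smooth potentials $u, u' \in \cH$ with $u \leq u'$, and then to extend to $\cE^1$ by monotone approximation, relying on the continuity of both $d_1$ and $E$ on $\cE^1$.

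For $u, u' \in \cH$ with $u \leq u'$, the affine path $u_t := (1-t) u + t u'$ lies in $\cH$ since $\om_{u_t} = (1-t) \om_u + t \om_{u'}$ is strictly positive. Because $\dot u_t = u' - u \geq 0$, the derivative formula for $E$ yields
$$
\int_0^1 \|\dot u_t\|_{L^1(\MA(u_t))}\, dt = \int_0^1 \int_X (u' - u)\, \MA(u_t)\, dt = \int_0^1 \tfrac{d}{dt} E(u_t)\, dt = E(u') - E(u),
$$
which proves $d_1(u, u') \leq E(u') - E(u)$. Conversely, for any smooth path $(v_t)_{t \in [0,1]}$ in $\cH$ joining $u$ to $u'$, the triangle inequality together with the derivative formula gives
$$
\int_0^1 \|\dot v_t\|_{L^1(\MA(v_t))}\, dt \geq \left|\int_0^1 \int_X \dot v_t\, \MA(v_t)\, dt\right| = |E(u') - E(u)| = E(u') - E(u),
$$
where the last equality uses that $u \leq u'$ forces $E(u) \leq E(u')$ (monotonicity of $E$, itself a consequence of the derivative formula applied to the affine path). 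Taking the infimum over paths yields the matching lower bound.

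For general $u, u' \in \cE^1$ with $u \leq u'$, the idea is to approximate by monotone smooth sequences and pass to the limit. Using Demailly-type regularization I would select decreasing sequences $\hat u_j \searrow u$ and $\hat u'_j \searrow u'$ in $\cH$, set $c_j := \sup_X(\hat u_j - \hat u'_j)_+$, and replace $\hat u_j$ by $v_j := \hat u_j - c_j \in \cH$, so that $v_j \leq \hat u'_j$ by construction. Applying the smooth case to $(v_j, \hat u'_j)$ and using the translation property $E(v_j) = E(\hat u_j) - c_j$ gives
$$
d_1(v_j, \hat u'_j) = E(\hat u'_j) - E(\hat u_j) + c_j.
$$
Provided $c_j \to 0$, the sequence $v_j$ also converges to $u$ in $\cE^1$, and the continuity of $d_1$ along decreasing sequences (Theorem~\ref{thm:Dar}) combined with the $d_1$-continuity of $E$ on $\cE^1$ allows passage to the limit, yielding $d_1(u, u') = E(u') - E(u)$.

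The main obstacle is the verification that $c_j \to 0$: pointwise one only has $\hat u_j - \hat u'_j \to u - u' \leq 0$ almost everywhere, whereas uniform upper convergence on $X$ is delicate when $u, u'$ are unbounded $\om$-psh functions. A safer route to circumvent this difficulty is to first reduce to the bounded setting via the truncations $u_k := \max(u, -k)$ and $u'_k := \max(u', -k)$, which automatically satisfy $u_k \leq u'_k$ and lie in $\cE^\infty \subset \cE^1$; one then proves the identity for this bounded pair by smoothing within $\cH$ (where the uniform convergence is now unproblematic), and finally passes to the $\cE^1$-limit as $k \to \infty$ using monotone convergence of $E$ and continuity of $d_1$.
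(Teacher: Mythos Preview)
Your argument for $u,u'\in\cH$ is correct and takes a genuinely different route from the paper. The paper works with the \emph{weak geodesic} $(u_t)$ joining $u$ to $u'$: Chen's $C^{1,1}$ regularity makes its $L^1$-length well defined, Darvas' Theorem~\ref{thm:Dar}(ii) ensures this length equals $d_1(u,u')$, and Lemma~\ref{lem:supaff} is invoked to obtain $\dot u_t\ge 0$, so that the length becomes $\int_0^1\tfrac{d}{dt}E(u_t)\,dt=E(u')-E(u)$. Your approach is more elementary: the affine path gives the upper bound directly, and the inequality $\int_0^1\|\dot v_t\|_{L^1(\MA(v_t))}\,dt\ge\bigl|\int_0^1\tfrac{d}{dt}E(v_t)\,dt\bigr|$ gives the lower bound for \emph{every} smooth path, with no appeal to Chen's regularity or to the metric-realizing property of weak geodesics. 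What the paper's argument buys in exchange is that it exhibits the weak geodesic itself as a length-minimizing path.

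For the passage to $\cE^1$ the paper simply writes ``by monotone regularization''. Your concern about preserving the order $u\le u'$ under smoothing is legitimate, but the proposed fix via truncation and the assertion that ``uniform convergence is now unproblematic'' is not quite right: bounded $\om$-psh functions need not admit \emph{uniform} smooth approximations, so boundedness alone does not force $c_j\to 0$. The clean resolution is that the B{\l}ocki--Ko{\l}odziej/Demailly regularization procedure is \emph{monotone in its input}: applying the \emph{same} smoothing to $u$ and to $u'$ yields $\hat u_j\le\hat u'_j$ automatically, hence $c_j=0$, and the rest of your limiting argument goes through unchanged.
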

\begin{proof} By monotone regularization, it is enough to prove this for $u,u'\in\cH$. The corresponding weak geodesic $(u_t)_{t\in[0,1]}$ is then $C^{1,1}$, and its $L^1$-length $\int_{t=0}^1\int_X|\dot u_t|\MA(u_t)$ computes $d_1(u,u')$. By \ref{lem:supaff}, $u_t(x)$ is a nondecreasing function of $t$, hence $\dot u_t\ge 0$, which yields 
$$
d_1(u,u')=\int_0^1dt\int_X\dot u_t\MA(u_t)=\int_0^1\left(\frac{d}{dt} E(u_t)\right)dt=E(u')-E(u).
$$
\end{proof}
When dealing with translation invariant functionals such as $M$ and $D$, it is useful to introduce the translation invariant functional $J:\cE^1\to\R_+$ defined by 
$$
J(u):=V^{-1}\int_X u\,\om^n-E(u),
$$
which vanishes iff $u$ is constant and satisfies $J(u)=d_1(u,0)+O(1)$ on functions normalized by $\sup u=0$, thanks to \ref{lem:d1E}. 

Since the pluripotential part $M_\pp$(u) of the Mabuchi K-energy is a linear combination of integrals of the form $\int_X u\,\om_u^j\wedge\om^{n-j}$ and $\int_Xu\,\Ric(\om)\wedge\om_u^j\wedge\om^{n-j-1}$, it admits a continuous extension $M_\pp:\cE^1\to\R$. As to the entropy part $M_\ent$, it extends to a lower semicontinuous functional 
$$
M_\ent:\cE^1\to[0,+\infty],
$$
by defining $M_\ent(u)$ to be the relative entropy of $\MA(u)$ with respect to $\mu_0$. Finiteness of $M_\ent(u)$ is a subtle condition, which amounts to saying that $\MA(u)$ has a density $f$ with respect to Lebesgue measure such that $f\log f$ is integrable.

\begin{thm}\label{thm:convcomp}\cite{BBEGZ,BerBer,CLP,BDL2} The extended functionals satisfy the following properties. 
\begin{itemize} 
\item[(i)] For each $C>0$, the set of $u\in\cE^1$ with $\sup_X u=0$ and $M_\ent(u)\le C$ is compact in the $d_1$-topology.
\item[(ii)] $|M_\pp(u)|\le A J(u)+B$ for some constant $A,B>0$. 
\item[(iii)] The functional $M:\cE^1\to(-\infty,+\infty]$ is lower semicontinuous and convex on weak geodesics. 
\end{itemize}
\end{thm}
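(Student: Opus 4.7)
For (i), my plan is to reduce to classical weak compactness of normalized $\om$-psh functions and then upgrade to $d_1$-convergence. First, a Moser--Trudinger-type inequality of the form used in \cite{BBEGZ} shows that on $\{u\in\cE^1:\sup_X u = 0\}$ the entropy $M_\ent$ dominates a multiple of $J(u)$, so a bound $M_\ent(u)\le C$ forces $J(u)\le C'(C)$. Given a sequence $(u_j)$ in the set, classical weak compactness of $\{u\in\PSH(X,\om):\sup_X u = 0\}$ in $L^1(X)$ extracts a subsequence $u_j\to u$ in $L^1(X)$, and a Hartogs argument preserves the normalization $\sup_X u = 0$. Joint lower semicontinuity of relative entropy yields $M_\ent(u)\le C$, and $L^1$-semicontinuity of $J$ gives $u\in\cE^1$. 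To upgrade to $d_1$-convergence I would invoke the Lipschitz equivalence in Theorem~\ref{thm:Dar}(i): the uniform entropy bound, combined with a Young inequality, forces uniform integrability of $u_j$ against the probability measures $\MA(u_j)$, allowing one to pass to the limit in $\int_X|u_j-u|\MA(u_j)$ and $\int_X|u_j-u|\MA(u)$.

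For (ii), the integrals occurring in the Chen--Tian expression for $M_\pp$ are of two kinds, $I_j(u):=V^{-1}\int_X u\,\om_u^j\wedge\om^{n-j}$ and $R_j(u):=V^{-1}\int_X u\,\Ric(\om)\wedge\om_u^j\wedge\om^{n-j-1}$. Normalizing $\sup_X u = 0$ (permitted by translation invariance of $M_\pp$), integration by parts gives $I_{j+1}(u)-I_j(u) = -V^{-1}\int_X du\wedge d^c u\wedge\om_u^j\wedge\om^{n-j-1}\le 0$, and since $(n+1)E(u)=\sum_j I_j(u)$ it follows that each $I_j(u) = E(u) + O(J(u))$, hence $|I_j(u)|\le A\,J(u) + B$. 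For $R_j$, the identity \ref{equ:Riccipot}, $\Ric(\om) = -\la\om + \ddc \rho$, reduces one piece to an $I_j$-type term already controlled; integration by parts converts the other piece to $V^{-1}\int_X\rho\,(\om_u-\om)\wedge\om_u^j\wedge\om^{n-j-1}$, which is uniformly bounded thanks to $\rho\in L^\infty(X)$ and the fact that the integrand is a bounded combination of probability measures.

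For (iii), continuity of mixed Monge--Amp\`ere integrals in $d_1$ (recalled in the excerpt from \cite{BBGZ,Dar15}) delivers $d_1$-continuity of $M_\pp:\cE^1\to\R$, so lower semicontinuity of $M$ reduces to that of $M_\ent$, which follows from joint weak lower semicontinuity of the relative entropy together with $d_1$-continuity of $u\mapsto\MA(u)$. For convexity on weak geodesics, I combine two ingredients already available: the Monge--Amp\`ere energy $E$ is affine on weak geodesics (as recorded earlier in the excerpt), and the entropy $M_\ent$ is convex on weak geodesics by the Berman--Berndtsson theorem \cite{BerBer}. The remaining pieces of $M_\pp$, via integration by parts, take the form of mixed Dirichlet energies whose subharmonicity in the complex parameter $\tau$ on $X\times\DD$ is forced by the positivity of $(\om + \ddc U)^n$ on the total space of the geodesic family, yielding convexity in $t=-\log|\tau|$.

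The main obstacle is the convexity of $M_\ent$ on weak geodesics, which depends essentially on Berndtsson's subharmonicity theorem for direct images of twisted relative canonical bundles; the other steps are either routine pluripotential manipulations or adaptations of the classical theory of normalized $\om$-psh functions, combined with Darvas's explicit description of the metric $d_1$. A secondary subtlety is the $L^1$-to-$d_1$ upgrade in (i), which crucially exploits the entropy bound to achieve the required uniform integrability.
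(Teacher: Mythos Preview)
The paper does not itself prove Theorem~\ref{thm:convcomp}; it is stated as a survey result with references to \cite{BBEGZ,BerBer,CLP,BDL2}. Your outline is broadly in the spirit of those sources, but part~(ii) contains a genuine error: you invoke \eqref{equ:Riccipot}, $\Ric(\om)=-\la\om+\ddc\rho$, to reduce the terms $R_j(u)$ to $I_j$-type integrals, yet \eqref{equ:Riccipot} is explicitly stated under the proportionality hypothesis~\eqref{equ:propor}, which Theorem~\ref{thm:convcomp} does \emph{not} assume. In the general cscK setting $\Ric(\om)$ is not cohomologous to a multiple of $\om$, and the Ricci potential $\rho$ is defined only through its Laplacian, so your reduction is unavailable. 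The fix is easy but different: choose $C>0$ with $-C\om\le\Ric(\om)\le C\om$ as $(1,1)$-forms; after normalizing $\sup_X u=0$ so that $u\le 0$, positivity of $(C\om\pm\Ric(\om))\wedge\om_u^j\wedge\om^{n-j-1}$ yields $|R_j(u)|\le C\,|I_j(u)|$, and your control of the $I_j$ then closes the estimate.

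For part~(iii), your plan to establish convexity of $M_\pp$ on its own via ``subharmonicity\dots forced by the positivity of $(\om+\ddc U)^n$'' is not how the cited proofs proceed and is too vague as written. Berman--Berndtsson \cite{BerBer} prove convexity of the \emph{full} K-energy $M$ along $C^{1,1}$ geodesics through positivity of direct images (the Ricci term entering via a twist by $K_X$, not as a separate Dirichlet-type piece), and \cite{CLP,BDL2} extend this to weak geodesics in $\cE^1$ by approximation and lower semicontinuity; there is no stand-alone convexity statement for $M_\pp$ in those arguments, and your sketch does not supply one. Your treatment of lower semicontinuity and of part~(i) is essentially correct in outline, though in~(i) the step from $L^1$-convergence of $u_j$ to weak convergence $\MA(u_j)\to\MA(u)$ (needed before you can appeal to joint lower semicontinuity of entropy) requires more justification than you indicate.
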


%
%
%
%
\subsection{Variational characterization of cscK metrics}
The Mabuchi K-energy $M$ is \emph{coercive} if $M\ge\d J-C$ on $\cE^1$ by for some constants $\d,C>0$. By \cite{BDL2}, it is in fact enough to test this on $\cH$. We then have the following basic dichotomy. 

\begin{thm}\cite{DH,DR,BBJ}\label{thm:Me1} If the K-energy $M$ is coercive, then it admits a minimizer in $\cE^1$. If not, then for any $u\in\cH$, there exists a unit speed weak geodesic ray $(u_t)_{t\in[0,+\infty)}$ in $\cE^1$ emanating from $u$, normalized by $\sup_X(u_t-u)=0$, along which $M(u_t)$ is nonincreasing. 
\end{thm}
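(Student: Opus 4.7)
My plan is to run the direct method of the calculus of variations on $(\cE^1, d_1)$, exactly as in the finite-dimensional Theorem~\ref{thm:coerc}. The key inputs will be the convexity, lower semicontinuity and compactness package of Theorem~\ref{thm:convcomp}, the affine formula for $\sup_X(u_t - u)$ on weak geodesics (Lemma~\ref{lem:supaff}), the identification of weak geodesic segments with $d_1$-geodesics from Theorem~\ref{thm:Dar}(ii), and the standard Lipschitz comparison $d_1(\cdot,u) \asymp J(\cdot) + O(1)$ on sup-normalized potentials.

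For the coercive case, I would take a minimizing sequence $u_j \in \cE^1$ for $M$, normalize by $\sup_X u_j = 0$, and deduce from $M \geq \delta J - C$ that $J(u_j)$ stays bounded. The estimate $|M_\pp| \leq AJ+B$ then bounds $M_\pp(u_j)$, so $M_\ent(u_j) = M(u_j)-M_\pp(u_j)$ is bounded too. Theorem~\ref{thm:convcomp}(i) furnishes a $d_1$-limit $u_\star \in \cE^1$ along a subsequence, and the lower semicontinuity from Theorem~\ref{thm:convcomp}(iii) gives $M(u_\star) \leq \inf M$, so $u_\star$ minimizes $M$.

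For the non-coercive case, my strategy is a direct transcription of the proof of Theorem~\ref{thm:coerc}. Since by \cite{BDL2} coercivity may be tested on $\cH$, non-coercivity produces $v_j \in \cH$ and $\delta_j \to 0$, $C_j \to +\infty$ with $M(v_j) \leq \delta_j J(v_j) - C_j$. After translating to $\sup_X(v_j - u) = 0$, I would set $T_j := d_1(v_j, u)$ and let $(u_{j,t})_{t \in [0,T_j]}$ be the unit-speed weak geodesic from $u$ to $v_j$; Lemma~\ref{lem:supaff} then yields $\sup_X(u_{j,t} - u) = 0$ for every $t$. One first checks that $T_j \to \infty$: otherwise $J(v_j)$ is bounded, hence so is $M_\pp(v_j)$, whereas $M_\ent \geq 0$ combined with $M(v_j) \leq \delta_j J(v_j) - C_j \to -\infty$ yields a contradiction. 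Convexity of $M$ along the segment (Theorem~\ref{thm:convcomp}(iii)) gives
$$
M(u_{j,t}) \leq M(u) + \tfrac{t}{T_j}\bigl(M(v_j) - M(u)\bigr),
$$
a uniform upper bound on each fixed interval, which combined with $J(u_{j,t}) \leq t + O(1)$ yields uniform entropy bounds on the family. Theorem~\ref{thm:convcomp}(i) plus a diagonal extraction then deliver a $d_1$-limit $u_t \in \cE^1$ on a dense set of $t$, which I would upgrade to a unit-speed weak geodesic ray $(u_t)_{t \geq 0}$ emanating from $u$ by invoking Theorem~\ref{thm:Dar}(ii), the completeness of $(\cE^1, d_1)$, and the usc-envelope characterization of weak geodesics.

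To conclude, taking $\liminf$ in the displayed inequality and using $J(v_j) \leq C' T_j + C''$, $\delta_j \to 0$, $C_j \to +\infty$ gives $\limsup_j (M(v_j) - M(u))/T_j \leq 0$, so lower semicontinuity of $M$ yields $M(u_t) \leq M(u)$ for every $t \geq 0$. Convexity of $M$ in $t$ together with this global upper bound then forces $t \mapsto M(u_t)$ to be nonincreasing on $[0,+\infty)$. The step I expect to be the most delicate is the ray-extraction: passing from diagonal $d_1$-limits of geodesic segments to a genuinely maximal (hence weak geodesic) ray of unit $d_1$-speed in $\cE^1$, for which the $d_1$-geodesicity of weak geodesic segments supplied by Theorem~\ref{thm:Dar}(ii) is indispensable.
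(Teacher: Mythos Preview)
Your proposal is correct and follows essentially the same route as the paper's proof: direct method on a sup-normalized minimizing sequence in the coercive case, and in the non-coercive case unit-speed weak geodesic segments from $u$ to a destabilizing sequence, entropy bounds via convexity and Theorem~\ref{thm:convcomp}(ii), compactness from Theorem~\ref{thm:convcomp}(i), and lower semicontinuity plus convexity to conclude. The only notable difference is in the extraction step: rather than diagonal extraction on a dense set followed by an ``upgrade'', the paper applies Ascoli's theorem directly to the $1$-Lipschitz maps $t\mapsto u_{j,t}$ (which take values in a fixed $d_1$-compact set on each bounded interval), obtaining locally uniform convergence to a ray that is then automatically a unit-speed weak geodesic---this packages your delicate step more cleanly.
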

\begin{proof} Assume that $M$ is coercive, and let $u_j\in\cE^1$ be a minimizing sequence, which can be normalized by $\sup u_j=0$ by translation invariance. Since $M(u_j)$ is bounded above, $J(u_j)$ is bounded, by coercivity, hence so is $|M_\pp(u_j)|\le A J(u_j)+B$. As a result, $M_\ent(u_j)$ is also bounded, which means that $u_j$ stays in a compact subset of $\cE^1$. After passing to a subsequence, we may thus assume that $u_j$ admits a limit $u\in\cE^1$, which is a minimizer of $M$ by lower semicontinuity. 

Assume now that $M$ is not coercive, \ie $M(u_j)\le\d_j J(u_j)-C_j$ for some sequences $u_j\in\cE^1$ with $\sup (u_j-u)=0$, $\d_j\to 0$ and $C_j\to+\infty$. We then argue as in \ref{thm:coerc}. Since $M_\ent(u_j)\ge 0$ and $M_\pp(u_j)\ge -A J(u_j)-B$, $(A+\d_j) J(u_j)\ge C_j-B$ tends to $\infty$, hence so does
$$
T_j:=d_1(u_j,u)=J(u_j)+O(1).
$$ 
Denote by $(u_{j,t})_{t\in [0,T_j]}$ the weak geodesic connecting $u$ to $u_j$, parametrized so that $d_1(u_{j,t},u_{j,s})=|t-s|$, and note that $\sup_X(u_{j,t}-u)=0$ for all $t$, by \ref{lem:supaff}. By convexity of $M$ along $(u_{j,t})$, we get
\begin{equation}\label{equ:Mconv}
\frac{M(u_{j,t})-M(u)}{t}\le\frac{M(u_j)-M(u)}{T_j}\le\d_j.  
\end{equation}
for $j\gg 1$. For each $T>0$ fixed, $|M_\pp(u_{j,t})|\le AJ(u_{j,t})+B$ is bounded for $t\le T$, hence so is $M_\ent(u_{j,t})$, by \ref{equ:Mconv}. By \ref{thm:convcomp}, the $1$-Lipschitz maps $t\mapsto u_{j,t}$ thus send each compact subset of $\R_+$ to a fixed compact set in $\cE^1$, and Ascoli's theorem shows that $(u_{j,t})$ converges uniformly on compact sets of $\R_+$ to a ray $(u_t)_{t\in\R_+}$ in $\cE^1$ (after passing to a subsequence). By local uniform convergence, $(u_t)$ is a weak geodesic, and satisfies $\sup(u_t-u)=0$ and $d_1(u_t,u_s)=|t-s|$. Further, $M(u_t)\le M(u)$ by \ref{equ:Mconv} and lower semicontinuity, which implies that $M(u_t)$ decreases, by convexity. 
\end{proof}

Using their key convexity result and a perturbation argument, Berman-Berndtsson proved in \cite{BerBer} that cscK metrics in the class $[\om]$ minimize $M$, and that the identity component $\Aut^0(X)$ of the group of holomorphic automorphisms acts transitively on these metrics. In \cite{BDL}, Berman-Darvas-Lu went further and proved that the existence of \emph{one} cscK metric $\om_u$ implies that any other minimizer of $M$ lies in the $\Aut^0(X)$-orbit of $u$, and hence is smooth. Using this, we have: 

 \begin{cor}\cite{DR,BDL}\label{cor:Me1} If $\Aut^0(X)$ is trivial and $M$ admits a minimizer $u\in\cH$, then $M$ is coercive. 
\end{cor}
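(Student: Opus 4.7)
The plan is a straightforward contradiction argument, combining the dichotomy of Theorem \ref{thm:Me1} with the Berman--Darvas--Lu uniqueness of minimizers modulo $\Aut^0(X)$.

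Suppose, for contradiction, that $M$ is not coercive. Since $u\in\cH$ is a minimizer of $M$ by hypothesis, $\om_u$ is a cscK metric. Applying the second alternative in Theorem \ref{thm:Me1} at the base point $u$, we obtain a unit speed weak geodesic ray $(u_t)_{t\in[0,+\infty)}$ in $\cE^1$ with $u_0=u$, normalized by $\sup_X(u_t-u)=0$, along which $M(u_t)$ is nonincreasing. Since $u$ already achieves the infimum of $M$, this forces $M(u_t)=M(u)=\inf_{\cE^1}M$ for every $t\ge 0$, so that each $u_t\in\cE^1$ is itself a minimizer of $M$.

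Now I invoke the Berman--Darvas--Lu result cited just before the corollary: the existence of the cscK potential $u\in\cH$ implies that \emph{every} minimizer of $M$ in $\cE^1$ belongs to the $\Aut^0(X)$-orbit of $u$ (and is in particular smooth). Because $\Aut^0(X)$ is assumed trivial, this orbit reduces, at the level of potentials, to the family $\{u+c : c\in\R\}$ arising from the translation invariance of $M$. Hence for each $t\ge 0$ there exists $c_t\in\R$ with $u_t=u+c_t$. The normalization $\sup_X(u_t-u)=0$ gives $c_t=0$, so $u_t\equiv u$ for all $t$. This contradicts the fact that $(u_t)$ has unit speed, i.e.\ $d_1(u_t,u)=t>0$ for $t>0$.

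I do not expect real obstacles here: the ingredients (the dichotomy, the sup-normalization on the geodesic ray supplied by Theorem \ref{thm:Me1}, the BDL orbit description of minimizers) are all in place, and their combination is essentially formal. The only point that requires a bit of care is to make sure the ray produced by Theorem \ref{thm:Me1} is genuinely nonconstant, which is guaranteed by its unit speed property together with the fact that $d_1$ is a genuine metric on $\cE^1$ (Theorem \ref{thm:Dar}); this is what rules out the degenerate possibility that the ``ray'' collapses to the single potential $u$.
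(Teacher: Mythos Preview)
Your proof is correct and follows essentially the same argument as the paper: assume non-coercivity, invoke the ray from Theorem~\ref{thm:Me1}, use the Berman--Darvas--Lu uniqueness of minimizers (up to constants, since $\Aut^0(X)$ is trivial) to force $u_t=u+c_t$, kill the constant via the normalization $\sup_X(u_t-u)=0$, and contradict unit speed. The paper's version is simply a terser rendering of the same steps.
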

\begin{proof} By \cite{BDL}, $u$ is the unique minimizer of $M$ in $\cE^1$, up to a constant. Assume by contradiction that $M$ is not coercice, and let $(u_t)$ be the ray constructed in \ref{thm:Me1}. Since $M(u_t)\le M(u)=\inf M$, $u_t$ must be equal to $u$ up to a constant, and hence $u_t=u$ by normalization, which contradicts $d_1(u_t,u)=t$. 
\end{proof}

If a minimizer $u$ of $M$ lies in $\cH$, then $u+t f$ is in $\cH$ for all test functions $f\in C^\infty(X)$ and $0<t\ll 1$, hence $M(u+t f)\ge M(u)$, which implies that $u$ is a critical point of $M$, \ie $\om_u$ is cscK. This simple perturbation argument cannot be performed for a minimizer in $\cE^1$, which is a major remaining difficulty\footnote{This difficulty has recently been overcome in \cite{CC1,CC2,CC3}.} on the analytic side of the cscK problem. In the K\"ahler-Einstein case, we have however: 

\begin{thm}\label{thm:varKE}\cite{BBGZ,BBEGZ} If the cohomological proportionality condition, \ref{equ:propor} holds, any mimizer of $M$ in $\cE^1$ lies in $\cH$, and hence defines a K\"ahler-Einstein metric. 
\end{thm}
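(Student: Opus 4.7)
The plan is to show that any minimizer $u \in \cE^1$ of the K-energy $M$ weakly satisfies the K\"ahler--Einstein Monge--Amp\`ere equation \ref{equ:MAKE}, and then to bootstrap regularity by the standard elliptic theory for the complex Monge--Amp\`ere operator with smooth, strictly positive right-hand side.

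The key input for the first part is the Ding--Mabuchi comparison. Under the proportionality condition \ref{equ:propor}, substituting $\Ric(\omega) = -\lambda\omega + \ddc\rho$ into the Chen--Tian decomposition of $M$ and integrating by parts yields an identity of the schematic form
$$
M(u) = H\bigl(\MA(u)\,\big|\,\mu_u\bigr) + \lambda D(u) + \mathrm{const}, \qquad \mu_u := \frac{e^{\lambda u}\mu_0}{\int_X e^{\lambda u}\mu_0},
$$
valid for $u \in \cE^1$ with $M_\ent(u) < +\infty$, where $D = L - E$ is the Ding functional. The relative entropy term is nonnegative by Jensen's inequality and vanishes precisely when $\MA(u) = \mu_u$, which is the KE equation \ref{equ:MAKE}. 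To pass from minimality of $u$ to this weak equation, one invokes the Calabi--Yau--Kolodziej solvability theorem in $\cE^1$ (\cite{GZ,BEGZ}) to produce a bounded $\omega$-psh candidate $v$ with $\MA(v) = \mu_u$, and then compares $M(u)$ and $M(v)$ using the above identity, the convexity of $M$ on weak geodesics (\ref{thm:convcomp}), and the equality case of Jensen to conclude $\MA(u) = \mu_u$ in the pluripotential sense. This is the crucial step where \cite{BBGZ,BBEGZ} circumvent the nonlinearity of $\cE^1$ by projecting the variational problem onto the convex cone of probability measures of finite entropy relative to $\mu_0$.

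For the second part, once $\MA(u) = \mu_u$ holds weakly, Kolodziej's $L^\infty$-estimate applied to this equation---whose right-hand side has density in every $L^p$ by a standard interpolation between the $L^1$-bound and the finite-entropy hypothesis---gives $u \in L^\infty$. The right-hand side $e^{\lambda u}\mu_0$ is then a smooth and strictly positive volume form, and the Evans--Krylov theorem combined with a Schauder bootstrap for the complex Monge--Amp\`ere operator delivers $u \in C^\infty$. Hence $u \in \cH$, and $\omega_u$ is a K\"ahler--Einstein metric by \ref{equ:MAKE}.

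The main obstacle is the derivation of the weak KE equation from minimality: in contrast to the finite-dimensional Riemannian picture of \ref{thm:coerc}, one cannot perturb $u$ linearly within $\cE^1$, potentials in $\cE^1$ need not be bounded, and $M_\ent$ may jump discontinuously along naive perturbation paths. The reformulation in terms of Monge--Amp\`ere solvability together with the Ding--Mabuchi comparison is the decisive technical device that unlocks the Euler--Lagrange equation.
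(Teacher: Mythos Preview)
Your overall plan---derive the weak equation $\MA(u)=c\,e^{\lambda u}\mu_0$ from minimality, then bootstrap to smoothness---is the right one, and the regularity sketch is essentially fine. The substantive gap is in the first step, where your route diverges from the paper's. The argument ``solve $\MA(v)=\mu_u$, then compare $M(u)$ with $M(v)$ via convexity and the equality case of Jensen'' does not close. Convexity of $M$ along the weak geodesic from $u$ to $v$, combined with minimality of $u$, gives only $M(u)\le M(v)$. Feeding this into your schematic identity yields
\[
H\bigl(\MA(u)\,\big|\,\mu_u\bigr)\;\le\;H\bigl(\mu_u\,\big|\,\mu_v\bigr)+\lambda\bigl(D(v)-D(u)\bigr),
\]
and nothing forces the right-hand side to be nonpositive: $\mu_v\ne\mu_u$ in general, and no a priori inequality between $D(u)$ and $D(v)$ is available. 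The equality case of Jensen therefore cannot be invoked, and $\MA(u)=\mu_u$ does not follow. Your description of \cite{BBGZ,BBEGZ} as ``projecting the variational problem onto the cone of probability measures'' is also not what actually happens there.

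The paper's device is different and is the heart of the proof. One first uses the Ding--Mabuchi comparison to see that a minimizer of $M$ is also a minimizer of the Ding functional $D=L-E$. The Euler--Lagrange equation for $D$ is then extracted by an Aleksandrov-type \emph{psh envelope} projection: for $f\in C^\infty(X)$ and $t>0$, let $P(u+tf)$ be the largest $\omega$-psh function $\le u+tf$. Monotonicity of $L$ gives
\[
D(u)\le D\bigl(P(u+tf)\bigr)\le L(u+tf)-E\bigl(P(u+tf)\bigr),
\]
and the key analytic input, from \cite{BB}, is that $t\mapsto E\bigl(P(u+tf)\bigr)$ is differentiable at $0$ with derivative $\int_X f\,\MA(u)$. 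Dividing by $t$, letting $t\to 0^+$, and then replacing $f$ by $-f$ gives $\MA(u)=c\,e^{\lambda u}\mu_0$ weakly. This envelope trick is precisely what substitutes for the forbidden linear perturbations in $\cE^1$, and it is missing from your proposal.
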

\begin{proof} It is not hard to show that a minimizer for $M$ is also a minimizer for the Ding functional $D=L-E$, whose critical points in $\cH$ are solutions of the complex Monge-Amp\`ere \ref{equ:MAKE}. The main step is now to prove that a minimizer $u\in\cE^1$ of $D$ satisfies \ref{equ:MAKE} in the sense of pluripotential theory, for the complex Monge-Amp\`ere arsenal can then be used to infer ultimately that $u$ is smooth. The projection argument to follow goes back to Aleksandrov in the setting of real Monge-Amp\`ere equations. Given a test function $f\in C^\infty(X)$, the \emph{psh envelope} $P(u+f)$ is defined as the largest $\om$-psh function dominated by $u+f$. The functional $L$ makes sense on any function $u$, $\om$-psh or not, and satisfies $u\le v\Longrightarrow L(u)\le L(v)$. We thus get for each $t>0$ 
$$
L(u)-E(u)=D(u) \le D(P(u+t f))\le L(u+t f)-E(P(u+t f)).
$$
The key ingredient is now a differentiability result proved in \cite{BB}, which implies that $t\mapsto E(P(u+t f))$ is differentiable at $0$, with derivative equal to $\int_X f\,\MA(u)$. This yields indeed 
$$
\int_Xf\,\MA(u)=\lim_{t\to 0_+}\frac{E(P(u+t f))-E(u)}{t}\le\lim_{t\to 0_+}\frac{L(u+t f)-L(u)}{t}=\frac{\int_X f\,e^{\la u}\mu_0}{\int_X e^{\la u}\om_0},
$$
which proves, after replacing $f$ with $-f$, that $u$ is a weak solution of \ref{equ:MAKE}. 
\end{proof}

%
%
%
\section{Non-Archimedean K\"ahler geometry and K-stability}\label{sec:NA}
In this final section, we turn to the non-Archimedean aspects of the cscK problem. We reformulate K-stability as a positivity property for the non-Archimedean analogue of the K-energy $M$, and explain how uniform K-stability implies coercivity, in the K\"ahler-Einstein case. 
%
%
%
\subsection{Non-Archimedean pluripotential theory}
If $X$ is a complex algebraic variety, we denote by $X^\NA$ its \emph{Berkovich analytification} (viewed as a topological space) with respect to the \emph{trivial absolute value} $|\cdot|_0$ on $\C$ \cite{Berko}. When $X=\spec A$ is affine, with $A$ a finitely generated $\C$-algebra, $X^\NA$ is defined as the set of all multiplicative seminorms $|\cdot|:A\to\R_+$ compatible with $|\cdot|_0$, endowed with the topology of pointwise convergence. In the general case, $X$ can be covered by finitely many affine open sets $X_i$, and $X^\NA$ is defined by gluing together the analytifications $X_i^\NA$ along their common open subsets $(X_i\cap X_j)^\NA$. 

Assume from now on that $X$ is projective, equipped with an ample line bundle $L$. The topological space $X^\NA$ is then compact (Hausdorff), and can be viewed as a compactification of the space of real-valued valuations $v:\C(X)^*\to\R$ on the function field of $X$, identifying $v$ with the multiplicative norm $|\cdot|=e^{-v}$. In particular, the trivial valuation on $\C(X)$ defines a special point $0\in X^\NA$, fixed under the natural $\R_+^*$-action $(t,|\cdot|)\mapsto |\cdot|^t$. 

In this trivially valued setting, (the analytification of) $L$ comes with a canonical \emph{trivial metric}. Any section $s\in H^0(X,L)$ thus defines a continuous function $|s|_0:X^\NA\to [0,1]$, the value of $-\log|s|_0$ at a valuation $v$ being equal to that of $v$ on the local function corresponding to $s$ in a trivialization of $L$ at the center of $v$. 

The space $\cH^\NA$ of \emph{non-Archimedean K\"ahler potentials} (with respect to $L$) is defined as the set of continuous functions $\f\in C^0(X^\NA)$ of the form 
$$
\f=\frac{1}{k}\max_i\{\log|s_i|_0+\la_i\}
$$
with $(s_i)$ a finite set of sections of $H^0(kL)$ without common zeroes and $\la_i\in\R$, those with $\la_i\in\Q$ forming $\cH^\NA_\Q\subset\cH^\NA$. In order to motivate this definition, recall that the data of a Hermitian norm $\g$ on $H^0(kL)$ defines a Fubini-Study/Bergman type metric on $L$, whose potential with respect to a reference metric $|\cdot|_0$ on $L$ can be written as 
$$
\FS_k(\g):=\frac{1}{k}\log\max_{s\in H^0(kL)\setminus\{0\}}\frac{|s|_0}{\g(s)}=\frac{1}{2k}\log\sum_i |s_i|_0^2
$$
for any $\g$-orthonormal basis $(s_i)$. Similarly, any non-Archimedean norm $\a$ on $H^0(kL)$ in the sense of \ref{sec:NAnorm} admits an orthogonal basis $(s_i)$, and we then have 
$$
\FS^\NA_k(\a):=\frac{1}{k}\log\max_{s\in H^0(kL)\setminus\{0\}}\frac{|s|_0}{\a(s)}=\frac{1}{k}\max_i\{\log|s_i|_0+\la_i\}.
$$
with $\la_i=-\log\a(s_i)$. Denoting respectively by $\cN_k$ and $\cN_k^\NA$ the spaces of Hermitian and non-Archimedean norms on $H^0(kL)$, we thus have two natural maps 
$$
\FS_k:\cN_k\to\cH,\,\,\,\,\,\FS^\NA_k:\cN_k^\NA\to\cH^\NA,
$$
and $\cH^\NA=\bigcup_k\FS^\NA_k(\cN^\NA_k)$ by definition. This is to be compared with the fact that $\bigcup_k \FS_k(\cN_k)$ is dense in $\cH$, a consequence of the fundamental Bouche--Catlin--Tian--Zelditch asymptotic expansion of Bergman kernels.

Non-Archimedean K\"ahler potentials are closely related to \emph{test configurations} for $(X,L)$, \ie $\C^*$-equivariant partial compactifications $(\cX,\cL)\to\C$ of the product $(X,L)\times\C^*$, with $\cL$ a $\Q$-line bundle. 

\begin{prop}\label{prop:tc} Every test configuration $(\cX,\cL)$ gives rise in a natural way to a function $\f_\cL\in C^0(X^\NA)$, which belongs to $\cH_\Q^\NA$ if $\cL$ is ample, and is a difference of functions in $\cH^\NA_\Q$ in general. Further, two test configurations $(\cX_i,\cL_i)$, $i=1,2$ yield the same function on $X^\NA$ if and only if $\cL_1$ and $\cL_2$ coincide after pulling-back to some higher test configuration.  
\end{prop}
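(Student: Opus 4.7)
The plan is to build $\f_\cL$ from the valuative data carried by the special fiber of $\cX$, identify it concretely via equivariant sections in the relatively ample case, and extract well-definedness from pulling back to a common dominating test configuration.

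First I would normalize $\cX$ and, after a suitable modification, arrange that there is a $\C^*$-equivariant birational morphism $\rho:\cX\to X\times\C$ which restricts to the identity outside the special fibers. Writing $\cX_0=\sum_i b_i E_i$ with $b_i\in\N$ and $E_i$ the irreducible components, each $b_i^{-1}\ord_{E_i}$ is a $\C^*$-invariant divisorial valuation on $\C(\cX)=\C(X)(t)$ whose restriction to $\C(X)$ defines a point $v_i\in X^\NA$. The $\Q$-Cartier divisor $D:=\cL-\rho^*p_1^*L$ is supported on $\cX_0$, hence of the form $\sum_i a_i E_i$ with $a_i\in\Q$, and I set $\f_\cL(v_i):=a_i/b_i$. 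Running over all test configurations, the points $v_i$ so obtained form a dense subset of $X^\NA$, which forces uniqueness of any continuous extension. In the relatively ample case, I would exhibit such an extension explicitly: for $k$ sufficiently divisible, $k\cL$ is generated by $\C^*$-equivariant sections $\tilde s_0,\dots,\tilde s_N$ without common zeros, whose generic-fiber restrictions $s_j\in H^0(X,kL)$ come with $\C^*$-weights $m_j\in\Z$. The filtration by these weights determines a non-Archimedean norm $\a_k\in\cN_k^\NA$, and a direct local computation at each $v_i$ matches $\f_\cL(v_i)$ with $\FS_k^\NA(\a_k)(v_i)$, so the continuous function $\FS_k^\NA(\a_k)\in\cH_\Q^\NA$ is the desired extension.

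For $\cL$ arbitrary, I would choose on a suitable further modification a $\C^*$-equivariant $\Q$-line bundle $\cA$ such that both $\cA$ and $\cA-\cL$ are relatively ample. The previous paragraph gives $\f_\cA,\f_{\cA-\cL}\in\cH_\Q^\NA$, and additivity of the divisorial assignment yields $\f_\cL=\f_\cA-\f_{\cA-\cL}$, exhibiting $\f_\cL$ as a difference of elements of $\cH_\Q^\NA$. For the final comparison, given test configurations $(\cX_1,\cL_1)$ and $(\cX_2,\cL_2)$, I would choose a third $\cX_3$ dominating both via $\mu_i:\cX_3\to\cX_i$. Since each $\mu_i$ is $\C^*$-equivariant and an isomorphism over $\C^*$, the divisorial assignment is unchanged by pullback, so $\f_{\mu_i^*\cL_i}=\f_{\cL_i}$. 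Hence $\f_{\cL_1}=\f_{\cL_2}$ iff the $\Q$-Cartier divisor $\mu_1^*\cL_1-\mu_2^*\cL_2$, which is supported on $(\cX_3)_0$, has vanishing coefficient along every irreducible component of $(\cX_3)_0$---equivalently, iff $\mu_1^*\cL_1=\mu_2^*\cL_2$ on $\cX_3$. The main obstacle is the continuity of the extension from divisorial valuations to $X^\NA$: this is straightforward in the ample case thanks to the explicit Fubini--Study formula, but the general case really requires the difference decomposition above (or equivalently working directly with the model metric on $\cL^\an$ and restricting it along the embedding $X^\NA\hookrightarrow\cX^\an$).
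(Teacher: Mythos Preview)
Your proposal is correct, and the overall architecture---define $\f_\cL$ from the divisor $D=\cL-\rho^*p_1^*L$ supported on the central fiber, identify with $\FS_k^\NA$ in the ample case, write a general $\cL$ as a difference of ample classes, and compare two test configurations on a common domination---is exactly the standard route and matches what the paper indicates.

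There is, however, a genuine difference in how the function is \emph{constructed}. You define $\f_\cL$ only at the finitely many divisorial points $v_i$ attached to the components of $\cX_0$, and then appeal to density and the explicit $\FS_k^\NA$ formula to extend continuously. The paper instead defines $\f_\cL(v)$ at \emph{every} point $v\in X^\NA$ in one stroke: each valuation $v$ on $\C(X)$ has a canonical $\C^*$-invariant Gauss extension $G(v)$ to $\C(X)(t)\simeq\C(\cX')$, and one sets $\f_\cL(v):=G(v)(D)$, evaluating $G(v)$ on a local equation of $D$ at its center. This bypasses the extension-from-a-finite-set issue entirely and makes the pullback invariance $\f_{\mu^*\cL}=\f_\cL$ immediate (same $D$ up to pullback, same Gauss extension). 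Your approach has the advantage of making the link with $\cH_\Q^\NA$ and the $\FS_k^\NA$ description very transparent, but it forces you to prove that the continuous extension you build via $\FS_k^\NA$ really does compute the coefficient $a/b$ at \emph{every} divisorial point arising on \emph{any} higher model---a point you need for the last equivalence, and which in the Gauss-extension formulation is automatic.
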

To define $\f_\cL$, denote respectively by $\cL'$ and $L_{\cX'}$ the pullbacks of $\cL$ and $L$ to the graph $\cX'$ of the canonical $\C^*$-equivariant birational map $\cX\dashrightarrow X\times\C$, and note that 
$$
\cL'=L_{\cX'}+D
$$ 
for a unique $\Q$-Cartier divisor $D$ supported in the central fiber $\cX'_0$. Every valuation $v$ on $X$ admits a natural $\C^*$-invariant (Gauss) extension $G(v)$ to $\C(X)(t)\simeq\C(\cX')$, which can be evaluated on $D$ by chosing a local equation for (a Cartier multiple of) $D$ at the center of $G(v)$, and we set $\f_\cL(v):=G(v)(D)$.

\begin{exam} Every $1$-parameter subgroup $\rho:\C^*\to\GL(H^0(kL))$ with $kL$ very ample defines a test configuration $(\cX,\cL)$, obtained as the closure of the orbit of $X\hookrightarrow\P H^0(kL)^*$. The $\Q$-line bundle $\cL$ is ample, and every test configuration $(\cX,\cL)$ with $\cL$ ample arises this way. By \ref{exam:1PS}, $\rho$ also defines a non-Archimedean norm $\a_\rho$ on $H^0(kL)$, and we have 
$$
\f_\cL=\FS^\NA_k(\a_\rho).
$$
Combined with \ref{prop:tc}, this implies that $\cH^\NA_\Q$ is in one-to-one correspondence with the set of all normal, ample test configurations. 
\end{exam}

A more general \emph{$L$-psh function} is defined as a usc function $\f:X^\NA\to[-\infty,+\infty)$ that can be written as the pointwise limit of a decreasing sequence (or net, rather) in $\cH^\NA$, defining a space $\PSH^\NA$. These functions are bounded above, and the maximum principle takes the simple form 
$$
\sup_{X^\NA}\f=\f(0),
$$
with $0\in X^\NA$ the trivial valuation. The space $\PSH^\NA$ is endowed with a natural topology of pointwise convergence on divisorial points, in which functions with $\sup\f=0$ form a compact set. This is proved in \cite{trivval}, building on previous work \cite{siminag} dealing with Berkovich spaces over the field $\C{\lau t}$ of formal Laurent series. 

\begin{exam}\label{exam:ideal} If $\fa$ is a coherent ideal sheaf on $X$, setting $|\fa|=\max_{f\in\fa} |f|$ defines a continuous function $|\fa|:X^\NA\to[0,1]$. Given $c>0$, one shows that the function $c\log|\fa|$ is $L$-psh if and only if $L\otimes\fa^c$ is nef, in the sense that $\mu^*L-c E$ is nef on the normalized blow-up $\mu:X'\to X$ of $\fa$, with exceptional divisor $E$. 
\end{exam}
%
%
%
%
\subsection{From geodesic rays to non-Archimedean potentials}
We assume from now on that $X$ is a projective manifold equipped with an ample line bundle $L$, and $\om\in c_1(L)$ is a K\"ahler form. Recall that a subgeodesic ray $(u_t)_{t\in\R_+}$ in $\PSH(X,\om)$ is encoded in the associated $S^1$-invariant $\om$-psh function 
$$
U(x,\tau)=u_{-\log|\tau|}(x)
$$ 
on $X\times\DD^*$. We shall say that $(u_t)$ has \emph{linear growth} if $u_t\le a t+b$ for some constants $a,b\in\R$, \ie $U+a\log|\tau|\le b$, a condition which automatically holds when $(u_t)$ is a weak geodesic ray emanating from $u_0\in\cH$, as a consequence of \ref{lem:supaff}. 

To a subgeodesic ray $(u_t)$ with linear growth, we shall associate an $L$-psh function 
$$
U^\NA:X^\NA\to[-\infty,+\infty),
$$
following a procedure initiated in \cite{hiro}. Imposing 
$$
(U+a\log|\tau|)^\NA=U^\NA-a,
$$
we may assume that $U$ is bounded above, and hence extends to an $\om$-psh function on $X\times\DD$. Consider first the case where $U$ has \emph{analytic singularities}, \ie locally satisfies
$$
U=c\log\max_i|f_i|+O(1)
$$ 
for a fixed constant $c>0$ and finitely many holomorphic functions $(f_i)$.  The (integrally closed) ideal sheaf
$$
\fa:=\left\{f\in\cO_{X\times\DD}\mid c\log|f|\le U+O(1)\right\}
$$
is then coherent, and $\C^*$-invariant by $S^1$-invariance of $U$. We thus have a weight decomposition $\fa=\sum_{i=0}^r\tau^i\fa_i$ for an increasing sequence of coherent ideal sheaves $\fa_0\subset\fa_1\subset\dots\subset\fa_r=\cO_X$ on $X$. One further proves that $L\otimes\fa_i^c$ is nef for each $i$, yielding $L$-psh functions $c\log|\fa_i|$ on $X^\NA$ by \ref{exam:ideal}, and hence an $L$-psh function 
$$
U^\NA=c\log|\fa|:=c\max_i\{\log|\fa_i|-i\}.
$$
In the general case, the \emph{multiplier ideals} $\cJ(k U)$, $k\in\N^*$, are $\C^*$-invariant coherent ideal sheaves on $X\times\DD$. They satisfy the fundamental subadditivity property
$$
\cJ\left((k+k')U\right)\subset\cJ(k U)\cdot\cJ(k' U),
$$
which implies the existence of the pointwise limit
$$
U^\NA:=\lim_{k\to\infty}\tfrac 1 k\log|\cJ(k U)|
$$
on $X^\NA$. A variant of Siu's uniform generation theorem \cite[\S 3.2]{BBJ} further shows the existence of $k_0$ such that $p_X^*\left((k+k_0)L\right)\otimes\cJ(k U)$ is globally generated on $X\times\DD$ for all $k$, and it follows that $U^\NA$ is indeed $L$-psh. 

\begin{exam} Pick $k\gg 1$, and let $\g_t=\iota_s(t\la)$ be a geodesic ray in $\cN_k$ associated to a basis $s=(s_i)$ of $H^0(kL)$ and $\la\in\R^N$. The image $u_t:=\FS_k(\g_t)$ is then a subgeodesic ray in $\cH$ with linear growth, and $U^\NA=\FS_k^\NA\left(\iota^\NA_s(\la)\right)$ is the image of the non-Archimedean norm defined by $(\g_t)$. If $\la$ is further rational, $U$ has analytic singularities, and blowing-up $X\times\C$ along the associated $\C^*$-invariant ideal $\fa$ defines a test configuration $(\cX,\cL)$ such that $U^\NA=\f_\cL$. 
\end{exam}

The function $U^\NA$ basically captures the Lelong numbers of $U$, and we have in particular $U^\NA=0$ iff $U$ has zero Lelong numbers at all points of $X\times\{0\}$. More specifically, let $\cX$ be a normal test configuration for $X$, pick an irreducible component $E$ of the central fiber $\cX_0$, and set $b_E:=\ord_E(\cX_0)$. The normalized $\C^*$-invariant valuation $b_E^{-1}\ord_E$ on $\C(\cX)\simeq\C(X)(\tau)$ restricts to a divisorial (or trivial) valuation on $\C(X)$, defining a point $x_E\in X^\NA$. By \cite[Theorem 4.6]{BHJ1}, every divisorial point in $X^\NA$ is of this type, which means that $U^\NA$ is determined by its values on such points, and Demailly's work on multiplier ideals shows that 
$$
-b_E U^\NA(x_E)=\lim_{k\to\infty}\tfrac 1 k \ord_E\left(\cJ(k U)\right)
$$ 
coincides with the generic Lelong number along $E$ of the pull-back of $U$ (cf.~\cite[Proposition 5.6]{hiro}).


%
%
%
%
\subsection{Non-Archimedean energy functionals}
Ideally, we would like to associate to each functional $F$ in \ref{sec:energy} a non-Archimedean analogue $F^\NA$, in such a way that 
\begin{equation}\label{equ:Fslope}
F^\NA(U^\NA)=\lim_{t\to\infty}\frac{F(u_t)}{t}
\end{equation} 
for all weak geodesic rays $(u_t)$. To get started, a special case of the pioneering work of A.Chambert-Loir and A.Ducros on forms and currents in Berkovich geometry \cite{CLD} enables to define a mixed non-Archimedean Monge-Amp\`ere operator
\begin{equation}\label{equ:mixedMANA}
(\f_1,\dots,\f_n)\mapsto\MA(\f_1,\dots,\f_n)
\end{equation}
on $n$-tuples $(\f_i)$ in $\cH^\NA$, with values in \emph{atomic} probability measures on $X^\NA$. When the $\f_i$ arise from test configurations $(\cX_i,\cL_i)$, we can assume after pulling back that all $\cX_i$ are equal to the same $\cX$, and we then have 
$$
\MA(\f_1,\dots,\f_n)=\sum_E b_E(\cL_1|_E\cdot...\cdot\cL_n|_E)\d_{x_E},
$$
where $\cX_0=\sum_E b_E E$ is the irreducible decomposition and the $x_E\in X^\NA$ are the associated divisorial points. 

We next introduce the \emph{non-Archimedean Monge-Amp\`ere energy} $E^\NA:\cH^\NA\to\R$ using the analogue of \ref{equ:E}. As in the K\"ahler case, $E^\NA$ is nondecreasing,  hence extends by monotonicity to $\PSH^\NA$, which defines a space 
$$
\cE^{1,\NA}:=\{E^\NA>-\infty\}\subset\PSH^\NA
$$ 
of \emph{$L$-psh functions $\f$ with finite $L^1$-energy}. It is proved in \cite{nama,trivval} that the mixed Monge-Amp\`ere operator \ref{equ:mixedMANA} admits a unique extension to $\cE^{1,\NA}$ with the usual continuity property along monotonic sequences, and that 
$$
J^\NA(\f):=\sup\f-E^\NA(\f)\in[0,+\infty)
$$
vanishes iff $\f\in\cE^1$ is constant. 
\begin{exam} A test configuration $(\cX,\cL)\to\C$, being a product away from the central fiber, admits a natural compactification $(\bar\cX,\bar\cL)\to\P^1$. The non-Archimedean Monge-Amp\`ere energy $E^\NA(\f)$ of the corresponding function $\f=\f_\cL\in\cH^\NA_\Q$ is then equal to the self-intersection number $\left(c_1(\bar\cL)^{n+1}\right)$, up to a normalization factor. Alternatively, 
$$
E^\NA(\f)=\lim_{k\to\infty}\frac{w_k}{k h^0(kL)}
$$
with $w_k\in\Z$ the weight of the induced $\C^*$-action on the determinant line $\det H^0(\cX_0,k\cL_0)$, see for instance \cite[\S 7.1]{BHJ1}.  
\end{exam}

If $(u_t)$ is a weak geodesic ray in $\cE^1$, $E(u_t)=a t+b$ is affine. Using that $U$ is more singular than $\cJ(kU)^{1/k}$, one shows that
\begin{equation}\label{equ:Eslope}
E^\NA(U^\NA)\ge a=\lim_{t\to\infty} E(u_t)/t, 
\end{equation}
which implies in particular that $U^\NA$ belongs to $\cE^{1,\NA}$. However, this inequality can be strict in general without further assumptions. 

\begin{exam}\label{exam:Eslope} Let $\om$ be the Fubini-Study metric on $X=\P^1$, normalized to mass $1$. A compact, polar Cantor set $K\subset\P^1$ carries a natural probability measure without atoms, and the potential $u$ of this measure with respect to $\om$ is smooth outside $K$, has zero Lelong numbers and does not belong to $\cE$. By \cite{RWN,Dar17b}, $u$ defines a locally bounded weak geodesic ray $(u_t)$ emanating from $0$ such that $E(u_t)=a t$ with $a<0$. However, the corresponding $\om$-psh function $U$ on $X\times\DD$ has zero Lelong numbers, hence $U^\NA=0$ and $E^\NA(U^\NA)=0$. 
\end{exam} 

The Mabuchi K-energy $M$ and the Ding functional $D$ also admit non-Archimedean analogues $M^\NA$ and $D^\NA$. While the pluripotential part $M^\NA_\pp$ of $M^\NA$ is defined in complete analogy with $M_\pp$ as a linear combination of mixed Monge-Amp\`ere integrals, the entropy part $M^\NA_\ent$ as well as $L^\NA$ turn out to be of a completely different nature, involving the \emph{log discrepancy function} 
$$
A_X:X^\NA\to[0,+\infty].
$$
The latter is the maximal lower semicontinuous extension of the usual log discrepancy on divisorial valuations, and we then have 
$$
M_\pp^\NA(\f)=\int_{X^\NA} A_X\MA(\f)
$$
and
$$
L^\NA(\f)=\left\{
    \begin{array}{ll}
        \la^{-1}\inf_{X^\NA}(A_X+\la\f) & \mbox{ if }\la\ne0\\
        \sup_{X^\NA}\f=\f(0) & \mbox{ if }\la=0.
    \end{array}
\right.
$$
where we have set as before $\la=V^{-1}(K_X\cdot L^{n-1})$. 
\begin{exam}\cite{BHJ1} If $(\cX,\cL)$ is an ample test configuration, then $M^\NA(\f)$ coincides with the \emph{Donaldson-Futaki} invariant of $(\cX,\cL)$, up to a nonnegative error term that vanishes precisely when $\cX_0$ is reduced. Further, $(X,L)$ is \emph{K-semistable} iff $M^\NA(\f)\ge 0$ for all $\f\in\cH_\Q^\NA$, and \emph{K-stable} iff equality holds only for $\f$ a constant. Following \cite{BHJ1,Der}, we say that $(X,L)$ is \emph{uniformly K-stable} if $M^\NA\ge\d J^\NA$ on $\cH_\Q^\NA$ for some $\d>0$. 
\end{exam} 
We can now state the following result, which builds in part on previous work by \cite{PRS} and \cite{BerKpoly}. 

\begin{thm}\cite{BHJ2,BBJ}\label{thm:slopeen} Let $(u_t)$ be any subgeodesic ray in $\cE^1$, normalized by $\sup u_t=0$. 
\begin{itemize}
\item[(i)] If $(u_t)$ has analytic singularities, then \ref{equ:Fslope} holds for $E$ and $M_\pp$.
\item[(ii)] If $(u_t)$ has strongly analytic singularities, then \ref{equ:Fslope} holds for $M_\ent$. 
\item[(iii)] In the Fano case, \ref{equ:Fslope} holds for $L$. 
\end{itemize}
\end{thm}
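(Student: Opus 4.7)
\emph{Setup.} The common reduction under the analytic-singularity assumption is to attach to $U$ a $\C^*$-invariant coherent ideal sheaf $\fa\subset\cO_{X\times\DD}$ with $U=c\log|\fa|+O(1)$ near the central fiber (after normalizing so $U$ is bounded above). Taking the normalized blow-up of $\fa$, possibly followed by a further log resolution, produces a test configuration $(\cX,\cL)$ for $(X,L)$ with $U^{\NA}=\f_{\cL}$. Pulled back to $\cX$, the function $U$ differs from a continuous reference potential on $\cL$ by an $O(1)$ term supported near $\cX_0$. This lets one translate asymptotic $t\to\infty$ questions about $(u_t)$ into intersection-theoretic and birational computations on the $\P^1$-compactification $(\bar\cX,\bar\cL)$.

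\emph{Proof of (i).} For $E$, after the reduction above the mixed Monge--Amp\`ere integrals defining $E(u_t)$ agree up to $O(1)$ with their analogues on $\cX$; integrating along the $S^1$-fibers of the annulus and applying Stokes' theorem expresses the $t\to\infty$ slope as an intersection number on $\bar\cX$. One obtains
\[
\lim_{t\to\infty}\frac{E(u_t)}{t}=\frac{(\bar\cL^{n+1})}{(n+1)V}=E^{\NA}(U^{\NA}).
\]
Since $M_{\pp}$ is a linear combination of mixed integrals, with some factors $\om_u$ replaced by $\Ric(\om)$, the same argument applies slot by slot, the Ricci contribution producing a $-c_1(K_X)$ in the relevant intersection number. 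The outcome matches the definition of $M_{\pp}^{\NA}$.

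\emph{Proof of (ii).} Here I use the stronger ``strongly analytic'' hypothesis to arrange $\cX$ to be a log resolution on which $\pi^{*}(\om+\ddc U)$ has a log-smooth density with respect to an snc reference. Writing $\MA(u_t)=V^{-1}\om_{u_t}^n$ as a pushforward from $\cX$, one can make the ratio $\MA(u_t)/\mu_0$ explicit in local snc coordinates along each component $E\subset\cX_0$. The Jacobian of the resolution introduces a logarithmic term whose coefficient is, component by component, precisely the log discrepancy $A_X(x_E)$ at the associated divisorial point. Integrating $\log(\MA(u_t)/\mu_0)$ against $\MA(u_t)$ and summing over $E$ then yields
\[
\lim_{t\to\infty}\frac{M_{\ent}(u_t)}{t}=\sum_E A_X(x_E)\,b_E(\cL|_E)^n=\int_{X^{\NA}}A_X\,\MA(U^{\NA}).
\]
The main technical obstacle is to justify passing the $t\to\infty$ limit through the entropy integral; this relies on uniform integrability of $\log(\MA(u_t)/\mu_0)$, which the strongly-analytic hypothesis makes accessible via snc-local Skoda-type estimates. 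I expect this to be the hardest step.

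\emph{Proof of (iii).} In the Fano case $\la<0$, the quantity $L(u_t)=\la^{-1}\log\int_X e^{\la u_t}\mu_0$ is a Laplace-type integral, which I would analyze by matching asymptotic upper and lower bounds. The upper bound $\limsup L(u_t)/t\le\la^{-1}\inf_{X^{\NA}}(A_X+\la U^{\NA})$ follows from H\"older's inequality applied with weights computed from the log canonical threshold of $U$, which is expressible on $X^{\NA}$ as an infimum of $A_X/(-U^{\NA})$ over divisorial valuations. The matching lower bound comes from restricting the integral to a shrinking neighborhood of a point where a log-discrepancy minimizer concentrates, and carrying out a local computation on a log resolution. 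Combining both gives the slope formula $\lim L(u_t)/t=L^{\NA}(U^{\NA})$.
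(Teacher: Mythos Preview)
The paper is a survey and does not itself prove this theorem; it simply cites \cite{BHJ2,BBJ}. So the comparison is necessarily against the arguments in those references.

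Your outlines for (i) and (ii) are essentially the approach of \cite{BHJ2}: pass to a test configuration via the normalized blow-up of the $\C^*$-invariant ideal $\fa$, compute the slopes of the mixed Monge--Amp\`ere energies as intersection numbers on $(\bar\cX,\bar\cL)$, and for the entropy part use that the strongly analytic hypothesis makes the pulled-back density log-smooth on an snc model so that the Jacobian contributes exactly the log discrepancies $A_X(x_E)$. The identification of the hardest step in (ii) (justifying the limit through the entropy integral) is accurate.

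Part (iii), however, has a genuine gap. The statement applies to an \emph{arbitrary} subgeodesic ray in $\cE^1$, with no analytic-singularity hypothesis, so there is in general no log resolution of $U$ on which to ``carry out a local computation''. Two ingredients that your sketch is missing are essential in \cite{BBJ,BerKpoly}. First, the existence of the slope: $t\mapsto L(u_t)$ is convex along subgeodesics by Berndtsson's positivity of direct images, which you do not invoke. Second, and more seriously, one of your two bounds requires an \emph{upper} bound on $\int_X e^{\la u_t}\mu_0$ (equivalently, control on the integrability of $e^{-|\la|U}$ near $X\times\{0\}$), and for general psh $U$ this cannot be read off from a single model. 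The actual argument identifies the slope with an integrability threshold of $U$ along the central fiber and then shows that this analytic threshold agrees with the valuative quantity $\inf_{X^\NA}(A_X+\la U^\NA)$; the passage from analytic to valuative uses Demailly's multiplier-ideal approximation $\cJ(kU)$ together with uniform global generation (this is precisely why $U^\NA$ is \emph{defined} via $\lim k^{-1}\log|\cJ(kU)|$). Your ``H\"older'' step does not accomplish this, and in fact the roles you assign to the two bounds appear to be reversed: restricting the integral to a small neighborhood gives a \emph{lower} bound on $\int e^{\la u_t}$ (hence, since $\la<0$, an \emph{upper} bound on $L(u_t)/t$), not the other way around.
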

Here we say that $(u_t)$ (or $U$) has \emph{strongly analytic singularities} if $U$ satisfies near each point of $X\times\{0\}$ 
$$
U=\tfrac c 2\log\sum_i|f_i|^2\text{ mod }C^\infty
$$ 
for a fixed constant $c>0$ and finitely many holomorphic functions $(f_i)$. 
%
%
%
%
\subsection{A version of the Yau--Tian--Donaldson conjecture}
In its usual formulation, the Yau--Tian--Donaldson conjecture states that $c_1(L)$ contains a cscK metric if and only if $(X,L)$ is K-(poly)stable. In the following form, it says that $M$ satisfies the analogue of \ref{thm:coerc}. 

\begin{conj}\label{conj:YTD} Let $(X,L)$ be a polarized projective manifold, $\om\in c_1(L)$ be a K\"ahler form, and assume that $\Aut^0(X,L)=\C^*$. The following are equivalent: 
\begin{itemize}
\item[(i)] there exists a cscK metric in $c_1(L)$; 
\item[(ii)] $M$ is coercive; 
\item[(iii)] $(X,L)$ is uniformly K-stable. 
\end{itemize}
\end{conj}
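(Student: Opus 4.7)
\medskip
\noindent\emph{Proof proposal.} The plan is to close the circle (i)$\Rightarrow$(ii)$\Rightarrow$(iii)$\Rightarrow$(ii)$\Rightarrow$(i), following the four-step variational program outlined in the introduction and mirroring the finite-dimensional template of Theorem~\ref{thm:coerc}. The implications (i)$\Leftrightarrow$(ii) are essentially in place, and the substantive content lies in the equivalence (ii)$\Leftrightarrow$(iii), which fuses pluripotential theory on $\cH$ with non-Archimedean pluripotential theory on $X^\NA$.

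For (i)$\Rightarrow$(ii): the assumption $\Aut^0(X,L)=\C^*$ means $X$ carries no nontrivial holomorphic vector fields, so by the Berman--Berndtsson theorem any cscK potential $u\in\cH$ minimizes $M$, and by Berman--Darvas--Lu it is the unique minimizer in $\cE^1$ up to a constant. Corollary~\ref{cor:Me1} then rules out a geodesic ray along which $M$ stays bounded, and Theorem~\ref{thm:Me1} yields coercivity. For (ii)$\Rightarrow$(iii): given an ample test configuration representing $\f\in\cH^\NA_\Q$, I would build a smooth subgeodesic ray $(u_t)$ with strongly analytic singularities such that $U^\NA=\f$; applying Theorem~\ref{thm:slopeen} to each term of the Chen--Tian decomposition of $M$, the coercivity inequality $M\ge\d J-C$ passes to slopes at infinity and yields $M^\NA(\f)\ge\d J^\NA(\f)$, which is precisely uniform K-stability.

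The core of the proof is (iii)$\Rightarrow$(ii). Arguing by contradiction as in Theorems~\ref{thm:coerc} and~\ref{thm:Me1}, non-coercivity produces a unit-speed weak geodesic ray $(u_t)$ in $\cE^1$ with $\sup_X(u_t-u_0)=0$, $d_1(u_t,u_0)=t$, and $M(u_t)\le M(u_0)$. One would like to associate $\f\in\cE^{1,\NA}$ to $(u_t)$ and derive a contradiction from the slope bound $\liminf_{t\to\infty} M(u_t)/t\ge M^\NA(\f)$ together with uniform K-stability $M^\NA(\f)\ge\d J^\NA(\f)>0$. Two serious obstacles arise: Theorem~\ref{thm:slopeen} provides the slope identity only for (strongly) analytic singularities, whereas our ray has none a priori; and, as Example~\ref{exam:Eslope} illustrates, the naive choice $\f:=U^\NA$ may be constant even for a nonconstant ray. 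The plan is then to approximate $(u_t)$ by rays $(u_{j,t})$ coming from ample test configurations $\f_j\in\cH^\NA_\Q$ lifted via the Fubini--Study construction, apply Theorem~\ref{thm:slopeen} at each finite level, and pass to the limit using lower semicontinuity of $M$ on $\cE^1$ and convexity-monotonicity of $M^\NA$. Making this diagonal argument rigorous in the cscK setting, and in particular controlling the entropy term $M_\ent$ under approximation, is where the hardest work lies.

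The remaining implication (ii)$\Rightarrow$(i) rests on Theorem~\ref{thm:Me1}, which produces a minimizer $u\in\cE^1$, followed by a regularity step promoting $u$ to a smooth cscK potential. In the K\"ahler--Einstein case $c_1(K_X)=\la[\om]$, both this regularity (Theorem~\ref{thm:varKE}) and the approximation above simplify considerably: the Ding functional $D=L-E$ replaces $M$ and admits the slope formula of Theorem~\ref{thm:slopeen}(iii) along arbitrary rays without analytic singularities, bypassing the main obstruction. In the general cscK case, the regularity of finite-energy minimizers is the Chen--Cheng a~priori estimate quoted in the footnote, while the slope inequality for $M_\ent$ along an arbitrary weak geodesic ray remains the currently missing step~(4) of the program. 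I expect these two open points, rather than the architecture of the argument, to constitute the genuine difficulty of the conjecture beyond the Fano setting.
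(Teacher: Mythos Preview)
Your proposal is an honest sketch and largely mirrors the paper's own discussion, but you should be clear that the statement is a \emph{conjecture}: the paper does not prove it, and neither does your proposal. What the paper establishes is exactly what you attribute: (i)$\Rightarrow$(ii) via Corollary~\ref{cor:Me1} (i.e.\ \cite{BDL}), and (ii)$\Rightarrow$(iii) via the slope formulas of Theorem~\ref{thm:slopeen} (i.e.\ \cite{BHJ2}). For (ii)$\Rightarrow$(i), the paper, like you, points to Theorem~\ref{thm:Me1} plus the Chen--Cheng regularity footnote. So on the architecture you and the paper agree.

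Where your phrasing of the open core (iii)$\Rightarrow$(ii) differs slightly from the paper's is worth noting. The paper isolates two obstacles: (1) one does not know that uniform K-stability, stated on $\cH^\NA_\Q$, propagates to $M^\NA\ge\d J^\NA$ on all of $\cE^{1,\NA}$; and (2) even granting (1), $M^\NA(U^\NA)$ need not equal the slope of $M(u_t)$ along the destabilizing ray (Example~\ref{exam:Eslope}). Your obstacles are that Theorem~\ref{thm:slopeen} requires (strongly) analytic singularities, and that $U^\NA$ may be trivial for a nontrivial ray. These overlap with (2), but you essentially bypass (1) by proposing an approximation by ample test configurations $\f_j\in\cH^\NA_\Q$ rather than working directly with $U^\NA\in\cE^{1,\NA}$. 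That is a legitimate alternative route, but it trades obstacle (1) for a new one you only gesture at: controlling $M_\ent$ (and $J$) under the diagonal limit so that $\liminf_j M^\NA(\f_j)$ bounds the slope of $M$ along the original ray, \emph{and} so that $J^\NA(\f_j)$ stays bounded below. Neither the paper nor the current literature supplies this control in the general cscK case; this is precisely the missing ``step (4)'' you name at the end. So your proposal is not wrong, but it is a program outline with the same genuine gaps the paper flags, repackaged through approximation rather than extension.
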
 
The implications (i)$\Longrightarrow$(ii)$\Longrightarrow$(iii) were respectively proved in \cite{BDL} (cf.~\ref{cor:Me1}) and \cite{BHJ2} (cf.~\ref{thm:slopeen}). By \ref{thm:Me1}, (ii) implies the existence of a minimizer $u\in\cE^1$ for $M$, and the key obstacle to get (i) is then to establish that $u$ is smooth\footnote{As already mentioned, this has recently been overcome in \cite{CC1,CC2,CC3}}. Assume now that (iii) holds. If (ii) fails, \ref{thm:Me1} yields a weak geodesic ray $(u_t)$ in $\cE^1$, emanating from $0$ and normalized by $\sup u_t=0$, $E(u_t)=-t$, along which $M(u_t)$ decreases, and hence $\lim M(u_t)/t\le 0$. Two major difficulties arise: 
\begin{enumerate}
\item While $U^\NA$ belongs to $\cE^{1,\NA}$, we cannot prove at the moment of this writing that (iii) propagates to $M^\NA\ge\d J^\NA$ on the whole of $\cE^{1,\NA}$. 
\item Even taking (1) for granted, \ref{exam:Eslope} shows that $M^\NA(U^\NA)$ cannot be expected to compute exactly the slope at infinity of $M(u_t)$. 
\end{enumerate} 
These difficulties can be overcome in the K\"ahler-Einstein case, by relying on the Ding functional as well. 

\begin{thm}\label{thm:ytd}\cite{BBJ} \ref{conj:YTD} holds if the proportionality condition $c_1(K_X)=\la[\om]$ is satisfied.
\end{thm}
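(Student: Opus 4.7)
The implications (i) $\Rightarrow$ (ii) $\Rightarrow$ (iii) hold in general: (i) $\Rightarrow$ (ii) is recalled in \ref{cor:Me1}, while (ii) $\Rightarrow$ (iii) follows from the slope formula \ref{thm:slopeen}(i)--(ii) applied to the geodesic rays attached to ample test configurations (which have strongly analytic singularities, so the formula is an exact equality). Hence the task is (iii) $\Rightarrow$ (i). The key simplification under the proportionality condition is the availability of the Ding functional $D = L - E$, which is convex on weak geodesics and finite on $\cE^1$, and for which \ref{thm:varKE} guarantees that any minimizer in $\cE^1$ is automatically smooth and defines a K\"ahler--Einstein (hence cscK) metric. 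Running the proof of \ref{thm:Me1} with $D$ in place of $M$ reduces matters to showing that uniform K-stability forces $D$ to be coercive on $\cE^1$, \ie $D \ge \delta J - C$ for some $\delta, C > 0$.

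The first step is to transfer the hypothesis to a uniform Ding inequality $D^\NA \ge \delta J^\NA$ on $\cH^\NA_\Q$, possibly after shrinking $\delta$. Evaluating \ref{thm:slopeen} on strongly analytic rays associated to ample test configurations, one gets an inequality of the form $D^\NA \le M^\NA$ on $\cH^\NA_\Q$ (the difference being essentially the nonnegative entropy contribution $M^\NA_\ent$), so uniform positivity of $M^\NA$ implies uniform positivity of $D^\NA$. Arguing now by contradiction, if $D$ is not coercive on $\cE^1$, the proof of \ref{thm:Me1} produces a unit-speed weak geodesic ray $(u_t)_{t \ge 0}$ in $\cE^1$ emanating from $0$ with $\sup_X u_t = 0$, along which $D(u_t)/t \to c \le 0$. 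Letting $U$ denote the associated $\om$-psh function on $X \times \DD$ and $U^\NA \in \cE^{1,\NA}$ its non-Archimedean envelope, the inequality $E^\NA(U^\NA) \ge \lim E(u_t)/t$ from (\ref{equ:Eslope}) combined with the identity $L^\NA(U^\NA) = \lim L(u_t)/t$ provided by the Fano case of \ref{thm:slopeen}(iii) yields
$$
D^\NA(U^\NA) \le \lim_{t \to \infty} D(u_t)/t \le 0.
$$

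The main obstacle is then twofold: the uniform Ding inequality is only known on $\cH^\NA_\Q$, not on all of $\cE^{1,\NA}$; and \ref{exam:Eslope} warns that $U^\NA$ can be trivial even though $d_1(u_t, 0) = t \to \infty$, so $J^\NA(U^\NA)$ may well vanish. Both issues are resolved simultaneously by approximating $U$ by the decreasing sequence of multiplier-ideal regularizations $U^{(k)} := \tfrac{1}{k} \log |\cJ(kU)|$. After a harmless shift controlled by Siu's uniform generation theorem, these define subgeodesic rays $(u^{(k)}_t)$ with strongly analytic singularities, dominating $(u_t)$ pointwise, whose non-Archimedean envelopes $\f_k$ lie in $\cH^\NA_\Q$ and decrease to $U^\NA$. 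For such approximants the slope formulas in \ref{thm:slopeen}(ii)--(iii) are exact equalities, so applying uniform Ding stability gives
$$
\delta J^\NA(\f_k) \le D^\NA(\f_k) = \lim_{t \to \infty} D(u^{(k)}_t)/t \le \lim_{t \to \infty} D(u_t)/t + o(1) \le o(1),
$$
where the middle inequality follows from $u_t \le u^{(k)}_t$ and the monotone behavior of $L$ together with the (affine) behavior of $E$ along weak geodesics. Passing to the limit $k \to \infty$ forces $J^\NA(U^\NA) = 0$, so $U^\NA$ is constant; a comparison of $J^\NA(\f_k)$ with $d_1(u_t^{(k)}, 0)$, using \ref{lem:d1E} and the normalization $\sup u_t = 0$, then contradicts $d_1(u_t, 0) = t \to \infty$. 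The hard part is the construction and fine analysis of the multiplier-ideal approximants $(u^{(k)}_t)$ along with the Archimedean/non-Archimedean slope comparisons they require, which is the technical core of \cite{BBJ}.
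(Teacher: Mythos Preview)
Your first step in (iii)$\Rightarrow$(i) contains a genuine error. You assert $D^\NA\le M^\NA$ on $\cH^\NA_\Q$ and conclude that ``uniform positivity of $M^\NA$ implies uniform positivity of $D^\NA$'', but the inequality points the wrong way: from $D^\NA\le M^\NA$ and $M^\NA\ge\delta J^\NA$ one cannot deduce $D^\NA\ge\delta J^\NA$. Passing from uniform K-stability to a uniform Ding inequality is a substantive step that the paper obtains by invoking the Minimal Model Program along the lines of \cite{LX}, reducing to \emph{special} test configurations on which $D^\NA$ and $M^\NA$ agree; the paper then extends $D^\NA\ge\delta J^\NA$ to all of $\cE^{1,\NA}$. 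This MMP input cannot be replaced by the elementary comparison you propose.

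Your endgame also does not close, and differs from the paper's. You aim to force $J^\NA(U^\NA)=0$ via multiplier-ideal approximation and then derive a contradiction with $d_1(u_t,0)=t$; but \ref{exam:Eslope} is precisely an example of a unit-speed weak geodesic ray with $U^\NA\equiv 0$, so no contradiction arises from $J^\NA(U^\NA)=0$ alone. Your intermediate bound $\lim_t D(u^{(k)}_t)/t\le\lim_t D(u_t)/t+o(1)$ is also unjustified: $u_t\le u^{(k)}_t$ gives $L(u^{(k)}_t)\ge L(u_t)$ \emph{and} $E(u^{(k)}_t)\ge E(u_t)$, so $D=L-E$ is not controlled by monotonicity. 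The paper sidesteps these issues entirely. Once $D^\NA\ge\delta J^\NA$ holds on $\cE^{1,\NA}$, one uses that along the destabilizing ray (produced by \ref{thm:Me1} for $M$, then transferred to $D$ via $M\ge D$) the slope of $L$ is an \emph{exact} equality in the Fano case (\ref{thm:slopeen}(iii)), giving $L^\NA(U^\NA)=\lim_t L(u_t)/t\le -1$, while \ref{equ:Eslope} gives only $E^\NA(U^\NA)\ge -1$. The uniform Ding inequality then yields $L^\NA(U^\NA)\ge(1-\delta)E^\NA(U^\NA)\ge\delta-1>-1$, a direct numerical contradiction that does not require $J^\NA(U^\NA)$ to be positive.
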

\begin{proof}[Sketch of proof] For $\la\ge 0$, all three conditions in the conjecture are known to be always satisfied, and we thus focus on the Fano case. \ref{thm:varKE} completes the proof of (ii)$\Longrightarrow$(i), which was anyway proved long before \cite{Tianbook} by using Aubin's continuity method. Assume (iii), and consider a ray $(u_t)$ as above. In the Fano case, we have $M\ge D$, which shows that $D(u_t)=L(u_t)-E(u_t)$ is bounded above as well. We infer from \ref{thm:slopeen} that $\f:=U^\NA$ satisfies
$$
L^\NA(\f)=\lim_{t\to\infty}\frac{L(u_t)}{t}\le\lim_{t\to\infty}\frac{E(u_t)}{t}=-1\le E^\NA(\f).
$$
Relying on the Minimal Model Program along the same lines as \cite{LX}, one proves on the other hand that (iii) implies $D^\NA\ge\d J^\NA$ on $\cH^\NA$, and then on $\cE^{1,\NA}$ as well. As $\f$ is normalized by $\sup\f=0$, this means 
$$
L^\NA(\f)\ge (1-\d) E^\NA(\f)\ge\d-1,
$$
a contradiction.
\end{proof}

%
%
%
%

\end{document}